\def\@rst #1 #2other{#1}
\renewcommand\MR[1]{\relax\ifhmode\unskip\spacefactor3000 \space\fi
  \MRhref{\expandafter\@rst #1 other}{#1}}
\renewcommand{\MRhref}[2]{\href{http://www.ams.org/mathscinet-getitem?mr=#1}{MR#1}}
\theoremstyle{plain}
\newtheorem{remarque}{Remark}[subsection]
\newtheorem{assumption}[remarque]{Assumption}
\newtheorem{theoreme}[remarque]{Theorem}
\newtheorem{definition}[remarque]{Definition}
\newtheorem{proposition}[remarque]{Proposition}
\newtheorem{lemma}[remarque]{Lemma}
\title{Sampling Random Cycle-Rooted Spanning Forests on Infinite Graphs}
\author{H. Constantin}
\newcommand\Prob{\mathbb{P}}
\renewcommand{\phi}{\varphi}
\renewcommand{\epsilon}{\varepsilon}
\begin{document}

\maketitle

\begin{abstract}
On a finite graph, there is a natural family of Boltzmann probability measures on cycle-rooted spanning forests, parametrized by weights on cycles. For a certain subclass of those weights, we construct Gibbs measures in infinite volume, as limits of probability measures on cycle-rooted spanning forests of increasing sequences of finite graphs. Those probability measures extend the family of already known random spanning forests and can be sampled by a random walks algorithm which generalizes Wilson's algorithm. We show that, unlike for uniform spanning forests, almost surely, all connected components are finite and two-points correlations decrease exponentially fast with the distance. 
\end{abstract}

\setcounter{tocdepth}{2}

\tableofcontents

 \section*{Introduction}
 
 We call \emph{cycle-rooted spanning forest}, on a finite connected graph, every subgraph which contains all vertices and all of whose connected components contain a unique cycle. Such a configuration of edges, endowed with a choice of orientation of cycles, can be seen as a discrete vector field on the graph: edges which are not in the cycle are oriented towards the cycle. Every vertex is associated with an edge starting from it. 
 
Given a connected finite graph~$G=(V,E)$, all of whose oriented cycles~$(\gamma)_{\gamma \in \mathcal C(G)}$ are endowed with positive weights~$(w(\gamma))$, we say that two oriented cycles are equivalent if they are equal after removal of their orientations. We define a probability measure on cycle-rooted spanning forests, induced by these cycle-weights as follows. Every spanning forest has a probability proportional to the product of weights of its cycles, counting both orientations:
\begin{equation} \label{defmeasure}
 \mu_w(F) = \frac{\prod_{[\gamma] \in \mathcal C(F)} (w(\gamma)+w(\gamma^{-1}))}{\mathcal Z_{w}},
 \end{equation}
where $\gamma, \gamma^{-1}$ are both oriented cycles of the equivalence class $[\gamma]$. The normalizing constant~$Z_{w}$ is called the \emph{partition function} of the model and is defined as follows:
$$ \mathcal Z_{w} = \sum_F \prod_{[\gamma] \in \mathcal C(F)} (w(\gamma)+w(\gamma^{-1})). $$

Recall that the usual model of uniform spanning tree is defined on finite graphs and its limit for infinite graphs is studied in~\cite{10.1214/aop/1176989121, benjamini_uniform_2001}. In these papers, the existence of a measure for infinite graphs is shown. This measure is sampled by an extension in infinite volume of Wilson's algorithm \cite{wilson} which does not depend on the ordering of vertices; see \cite{lyons_probability_2017} for a textbook treatment of this topic. These authors study the properties of the configurations under the measure in infinite volume, such as the number of connected components.

A model of random rooted spanning forests, all of whose connected components are rooted trees is studied in~\cite{boutillier_z-invariant_2017}. In this model, probability measures are associated with weights on vertices. If only one vertex has a weight, the probability measure associated with this weight has support in rooted trees, whose unique root is this vertex, and is equal to the uniform spanning tree measure after forgetting this root. In the model of rooted spanning forests, the random configurations are still sampled by an algorithm of loop-erased random walks. 

The model of rooted spanning forests is a particular case of the model of cycle-rooted spanning forests, which is studied in this article. Indeed, weights on vertices can be interpreted as weights on small self-loops over vertices and the roots of the random configuration can be seen as the unique cycles of their connected component. From this point of view, the model of cycle-rooted spanning forests is a generalization of the rooted spanning forests and of the uniform spanning tree.  

The measure on cycle-rooted spanning forests on finite graphs associated with a weight function on cycles for which every cycle has a weight smaller than 1 is studied in~\cite{10.1214/15-AOP1078}. This measure is sampled by a loop-erased random walks algorithm inspired from the Propp-Wilson algorithm for the generation of a random spanning tree. This algorithm does not depend on the ordering of vertices. A similar algorithm is also introduced in~\cite{PhysRevE.76.041140} to generate a random spanning web of square lattice annuli, using a ``cycle-popping'' inspired from the Propp-Wilson algorithm.

In this article, we study properties of probability measures on cycle-rooted spanning forests depending on a weight function on oriented cycles with values smaller than~1. We show that under an assumption on weights (Assumption~\ref{assumption}), the weak limit of measures on spanning forests on growing finite graphs, with cycle-weights smaller than~1, is well-defined and sampled by an algorithm of loop-erased random walks (Theorem~\ref{algothermo}) which does not depend on the ordering of vertices (Theorem~\ref{order}). We show furthermore that under this measure with a stronger assumption on weights (Assumption~\ref{assumption2}), all connected components are almost surely finite (Theorem~\ref{allccfinite}) and the decay of edge-edge correlations is exponential (Theorem~\ref{decayexpwilson}). Those properties show that under this assumption on weights, the measure which is constructed in infinite volume corresponds to a different ``phase'', in the sense of statistical mechanics, than the uniform spanning forests measure studied in~\cite{10.1214/aop/1176989121, benjamini_uniform_2001, lyons_probability_2017}. We also study probability measures on cycle-rooted spanning forests of finite graphs determined by a weight function~$w$ which can take values larger than 1. Those measures are no longer sampled by a loop-erased random walk algorithm. Nevertheless, we show that conditional on cycles of weights larger than~1, the measure is determined by a modified weight function~$w_-$ (Definition~\ref{def:w-}) which takes values smaller than~1 (Theorem \ref{algoconditional}). Assuming the existence of an infinite volume measure~$\mu_w$, we show that when Assumption~\ref{assumption2} is satisfied by this modified weight function~$w_-$, every connected component with a cycle is finite (Theorem~\ref{cyclefinite}). Combined with Proposition~\ref{onecycle} which says that almost surely every finite connected component has a cycle, this result implies that, almost surely, every connected component is either a finite cycle-rooted tree or an infinite tree. 

The paper is organized as follows. In Section~\ref{section1}, we define the probability measures on cycle-rooted spanning forests of finite graphs we are concerned with and give properties on these measures. In Section~\ref{section3}, we define the~$p$-loop erased random walks which generalize the loop-erased random walks and which are used in the third section to define a measure in infinite volume sampled by a random walk algorithm. In Section~\ref{section4}, we study the weak limit of probability measures on spanning forests on growing finite graphs, which gives the existence of a probability measure in infinite volume. In Section~\ref{section5}, we study the long-range behavior of the configurations under this probability measure, such as the non-existence of an infinite connected component and the exponential rate of decay of edge-to-edge correlations. In Section~\ref{section6}, we study properties of infinite configurations sampled by infinite volume probability measures which are determined by unbounded weights on cycles, provided the limit exists.

 \section{Measures on Cycle-Rooted Spanning Forests on finite graphs}
 
 \label{section1}

 \subsection{Cycle-rooted spanning forests}

Let~$G=(V,E)$ be a finite connected graph with vertex set~$V$ and edge set~$E$. For every subgraph~$F$ of~$G$, let~$\mathcal C(F)$ be the set of unoriented simple cycles of~$F$ and~$E(F)$ be the set of edges of~$F$. If~$[\gamma] \in \mathcal C(F)$ is a cycle of~$F$, denote by~$\gamma$ and~$\gamma^{-1}$ the two oriented cycles obtained from~$\gamma$. Let~$\mathcal C_\to (F)$ be the set of oriented cycles of~$F$.

We say that a subgraph of~$G$ is a \emph{cycle-rooted spanning forest} (CRSF) if it contains all the vertices and if each of its connected components contains a unique cycle. Let~$\mathcal U(G)$ be the set of CRSFs of~$G$.

Let~$w : \mathcal C_\to (G) \to \mathbb R_+$ be a non-zero function with non-negative values, defined on oriented cycles of~$G$. There is a natural probability measure on~$\mathcal U(G)$ associated with~$w$, which is denoted by~$\mu_{w}$. 
It is defined for every CRSF~$F \in \mathcal U(G)$ by:
\begin{equation} \label{defmeasures}
\mu_{w}(F) = \frac{  \prod_{[\gamma] \in \mathcal C(F)} (w(\gamma)+w(\gamma^{-1}))}{Z_{w}}, 
\end{equation} 
where $\gamma, \gamma^{-1}$ are both oriented cycles of the equivalence class $[\gamma]$, and~$Z_{w}$ is called the {partition function} of the model
\[
Z_{w} = \sum_{F \in  \mathcal U(G)} \prod_{[\gamma] \in \mathcal C(F)} (w(\gamma)+w(\gamma^{-1})). \]

We say that~$F$ is an \emph{oriented cycle-rooted spanning forest} (OCRSF) if it is a CRSF and every cycle of~$F$ is given an orientation, that is to say if every connected component contains a unique oriented cycle. Let~$\mathcal U_\to(G)$ be the set of OCRSFs of~$G$. Every edge of an OCRSF is oriented towards the cycle of its connected component. 

The partition function of the model can also be written as a sum of weights over OCRSF as follows:
\[
Z_{w} = 
 \sum_{F \ \text{OCRSF}}  \prod_{\gamma \in \mathcal C_\to(F)} w(\gamma), 
\]
and it gives a natural probability measure on OCRSF. 

Let~$W \subset V$ be a subset of vertices of~$G$. We say that~$F$ is a wired cycle-rooted spanning forest or essential cycle-rooted spanning forest (ECRSF) with respect to~$W$ if every connected component of~$F$ is either a unicycle disjoint from~$W$ or an unrooted tree which contains a unique vertex of~$W$, called a boundary-rooted tree. Let~$\mathcal U_W(G)$ the set of ECRSF with respect to~$W$.

\begin{definition}[Wired boundary conditions] \label{defwired}
We define a measure on~$\mathcal U_W(G)$ called the wired measure on ECRSF of~$G$ with boundary~$W$ whose configurations have weight proportional to the product of weights of cycles.
\begin{equation} \label{defwiredmeasures}
\mu^W_{w}(F) = \frac{ \prod_{[\gamma] \in \mathcal C(F)} (w(\gamma)+w(\gamma^{-1}))}{Z^W_{w}} 
\end{equation}
where $\gamma, \gamma^{-1}$ are both oriented cycles of the equivalence class $[\gamma]$.
\end{definition}

Notice that the measure defined in \eqref{defmeasures} corresponds to the case~$W= \emptyset$.

\subsection{Wilson's algorithm\label{algofini}}

When the weight function~$w$ is identically equal to 0, this measure~$\mu_w^W$ has support on ECRSF all of whose connected component are boundary-rooted trees. In particular, when~$W=\{r\}$ is a single vertex and the weight function~$w$ is identically equal to $0$, this measure has support on spanning trees rooted at~$r$. Since spanning trees on $G$ are in $1$-to-$1$ correspondence with spanning trees of $G$ rooted at $r$, this measure is independent of the choice of vertex~$r$ and gives to every spanning tree the same weight. Therefore, this measure is the uniform spanning tree measure defined for every tree~$T$ by:
\begin{equation} \label{tree}
\mu(T) = \frac{1}{Z_{tree}} 
\end{equation}
 where~$Z_{tree}$ is the partition function, that is the number of spanning trees of the graph~$G$. This measure~$\mu$ can be sampled by the Wilson algorithm (\cite{wilson}).

Assume that for every oriented cycle~$\gamma \in \mathcal C_\to (G)$, 
$$ w(\gamma) \in [0,1]$$
We will write~$p$ instead of~$w$ in the following when this assumption is satisfied.

According to~\cite{10.1214/15-AOP1078}, the measure~$\mu_{p}$ can be sampled by an algorithm of loop-erased random walk where we keep an oriented cycle~$\gamma$, with probability~$p(\gamma)$. 

More precisely, let~${x_1,\ldots,x_n}$ be an ordering of the vertex set~$V$ of~$G$ and let~$\mathsf F_0 = \emptyset$. At each step~$i$, let~$(X_n^{(x_i)})_{n \geq 0}$ be a simple random walk on the graph~$G$ starting from~$x_{i}$. Every time the random walk makes a loop, the oriented cycle~$\gamma$ is kept with probability~${p(\gamma)}$ or erased with probability~${1-p(\gamma)}$. The random walk~${(X_n^{(x_i)})_{n \geq 0}}$ is stopped when it reaches the set of already explored vertices denoted by~$V({\mathsf F_{i-1}})$ or when a cycle is kept. In the end of the~$i^{\text{th}}$ step, let~$\mathsf F_{i} = \mathsf F_{i-1} \cup L(X_n^{(x_i)})$ where~$L(X_n^{(x_i)})$ is obtained from~$(X_n^{(x_i)})_{n \geq 0}$ after removing all the loops except the last one if a loop is kept at the end of the~$i^{\text{th}}$ step. At the end,~$V(\mathsf F_n) = V(G_n)$. Notice that the algorithm always finishes if and only if there exists at least a loop~$\gamma$ in~$G$ such that~$p(\gamma)>0$.   

The measure~$\mu_{p}^W$ defined in Definition~\ref{defwired} can also be sampled by an algorithm. 
We follow the same algorithm but every time the random walk meets~$W$, the walk stops and a new random walk starts from the next vertex in the ordering. At the beginning of the algorithm, we set~$\mathsf F_0 = W$ instead of~$\mathsf F_0 = \emptyset$. The algorithm always finishes if and only if there exists at least a loop~$\gamma$ in~$G \backslash W$ such that~$p(\gamma)>0$ or~$W \neq \emptyset$.  

Let us emphasize that when~$W=\{r\}$ is a single vertex and the weight function~$w$ is equal to 0, then the sampling algorithm described just above is the classical Wilson algorithm which samples a uniform spanning tree on~$G$ rooted at~$r$.

\section{$p$-Loop erased random walks and rooting times} \label{lerw}

\label{section3}

In the following, we will consider a countably infinite connected graph~$G=(V,E)$, with finite degrees, exhausted by an increasing sequence~$(G_n)_{n \geq 1}$ of connected induced subgraphs of~$G$, with respective vertex set~$V_n$. We denote by~$\partial G_n$ the subset of~$V_n$ of vertices which are connected by an edge to the complement of~$G_n$ in~$G$.

For every~$v \in V$, we denote by~$\Prob_v$ the law of a simple random walk on~$G$ starting from~$v$.

We consider a weight function~$w=p \in [0,1]$ and we make the following assumption on the exhaustion~$(G_n)$ of the graph~$G$ and the weight function~$p$. 

\begin{assumption} \label{assumption}
There exists~$\alpha >0$ and~$\beta >0$, such that for every~$n \in \mathbb N^*$, for every random walk~$(X_n)_{n \geq 0}$ on~$G$, starting from a vertex~$v$ of~$\partial G_n$, there exists a loop~$\gamma_v$ in~$G_{n+1} \backslash (G_n \cup \partial G_{n+1})~$ which satisfies~$p(\gamma_v) \geq \alpha~$ and~$\Prob_v((X_1,\ldots, X_{\vert \gamma \vert}) = \gamma) > \beta$. 
\end{assumption}

\subsection{Hitting times, rooting time}

In the following we denote by~$v_0$ a vertex of~$G_1$.

\begin{definition}
If~$C$ is a subset of the vertex set~$V$, we define for a random walk~$(X_n)$ the hitting time of~$C$, that is to say
~$$ T_C := \min \{k \geq 0 \vert X_k \in C \}.$$
 Notice that in this definition,~$T_C$ can be equal to 0 if the random walk starts from a vertex of~$C$.
\end{definition}

\begin{definition}
Let~$(X_n)$ be a simple random walk starting from~$v_0$. 
Let~$(T_n)$ be the sequence of random hitting times of~$\partial G_n$ for the random walk~$(X_n)$, that is to say 
$$ T_n := T_{\partial G_n} = \min \{k \geq 0 \vert X_k \in \partial G_n \}.$$
\end{definition}

\begin{lemma}{\label{hitting}}
The hitting-time~$T_n$ is finite almost-surely for every~$n \in \mathbb N^*$. Furthermore,
$$ \lim_{k \rightarrow \infty} \Prob_{v_0}(T_n \geq k) = \Prob_{v_0}(T_n = \infty) = 0.$$
\end{lemma}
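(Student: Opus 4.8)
The plan is to prove that $T_n$ is finite almost surely by induction on $n$, observing first that the statement $\Prob_{v_0}(T_n = \infty) = \lim_{k\to\infty}\Prob_{v_0}(T_n \geq k)$ is automatic since $\{T_n \geq k\}$ is a decreasing sequence of events whose intersection is exactly $\{T_n = \infty\}$; so the content is only that $\Prob_{v_0}(T_n = \infty) = 0$.

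For the base case $n=1$: since $v_0 \in G_1$, we need to know $v_0 \in \partial G_1$ or at least that the walk reaches $\partial G_1$; in fact $T_1 = 0$ is possible, and more generally, since $G$ is connected and infinite while $G_1$ is a finite induced subgraph, the walk started at $v_0$ must leave $G_1$ eventually (by recurrence of the finite set argument, or simply because on a connected graph a simple random walk started inside a finite set $G_1$ reaches its vertex boundary with probability one — any vertex of $G_1$ adjacent to a vertex outside $G_1$ lies in $\partial G_1$, and from any vertex one can reach such a vertex with positive probability in a bounded number of steps, so by the Borel–Cantelli / Markov-chain argument the walk hits $\partial G_1$ a.s.). This is where I would be slightly careful: the key elementary fact is that a simple random walk on a connected graph, started anywhere, hits any fixed nonempty finite set $C$ almost surely \emph{provided} it does not get trapped — but here $G$ has finite degrees and is connected, and $\partial G_1$ separates $G_1$ from the rest of $G$, so to even be at a vertex outside $G_1$ the walk must pass through $\partial G_1$; thus $T_1 < \infty$ a.s. follows from the fact that the walk is not confined to the finite set $G_1 \setminus \partial G_1$, which would force it to hit $\partial G_1$, and confinement to a finite set has probability zero for an irreducible walk on an infinite connected graph (standard: from each vertex there is a positive-probability escape route of bounded length, apply the second Borel–Cantelli-type estimate along successive blocks).

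For the inductive step, suppose $T_n < \infty$ a.s.; I want $T_{n+1} < \infty$ a.s. By the strong Markov property applied at time $T_n$, it suffices to show that a random walk started from any vertex $v \in \partial G_n$ hits $\partial G_{n+1}$ almost surely. Now the walk is confined to the (finite) set $V_{n+1} \setminus \partial G_{n+1}$ for as long as $T_{\partial G_{n+1}}$ has not occurred, and exactly as in the base case, an irreducible walk on the infinite connected graph $G$ cannot stay forever in a finite set; equivalently, partition time into successive intervals of length $|\gamma_v|$ (the bound is uniform — recall $G$ has finite degrees, so cycle lengths in $G_{n+1}$ are bounded, and we may take a uniform block length $N_{n+1}$), and note that in each block, from wherever the walk currently sits in $V_{n+1}\setminus\partial G_{n+1}$, there is a probability bounded below by some $c_{n+1} > 0$ of reaching $\partial G_{n+1}$ within $N_{n+1}$ steps (finitely many starting vertices, each with a finite path to the boundary of positive probability). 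Hence the probability of avoiding $\partial G_{n+1}$ for $m$ blocks is at most $(1-c_{n+1})^m \to 0$, giving $T_{n+1} < \infty$ a.s.

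The main obstacle — really the only subtlety — is the uniform lower bound $c_{n+1} > 0$ on the one-block hitting probability: this uses that $G_{n+1}$ is a \emph{finite} induced subgraph (finitely many vertices to start from), that $G$ has \emph{finite degrees} (so the number of length-$\leq N_{n+1}$ paths is finite and each has probability at least $(\deg_{\max})^{-N_{n+1}} > 0$), and that $\partial G_{n+1}$ is reachable from every vertex of $V_{n+1}$ (because $G$ is connected, any vertex of $V_n \subseteq V_{n+1}$ has a path to the exterior of $G_{n+1}$, which must cross $\partial G_{n+1}$). I note that Assumption~\ref{assumption} is not actually needed for this lemma — connectedness and finite degrees suffice — but the assumption's hypothesis that $\Prob_v((X_1,\ldots,X_{|\gamma|}) = \gamma) > \beta$ for a loop $\gamma_v$ inside $G_{n+1}$ already packages exactly the kind of uniform positive-probability bound used here, so one could alternatively invoke it directly to obtain $c_{n+1} \geq \beta$ with block length $\max_v |\gamma_v|$.
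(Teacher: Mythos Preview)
Your argument is correct and considerably more careful than the paper's own proof, which simply asserts that ``almost surely \ldots\ the random walk makes $k$ consecutive steps in the same direction'' (a phrase that does not literally make sense on a general graph) and concludes that the walk exits every finite set almost surely, then invokes monotone convergence for the displayed limit exactly as you do. In substance both proofs rest on the same fact: a simple random walk on an infinite connected locally finite graph cannot remain forever in a fixed finite set, because from every vertex of that set there is a uniformly positive probability of escaping within a bounded number of steps. You make this explicit via the block/Borel--Cantelli estimate; the paper leaves it implicit.

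Two small remarks. First, the induction on $n$ is unnecessary: your base-case argument (the walk cannot be confined to the finite set $V_n\setminus\partial G_n$) already proves $T_n<\infty$ a.s.\ directly for every $n$, since $v_0\in G_1\subset G_n$ and any exit from $G_n$ must pass through $\partial G_n$. The strong Markov step buys nothing here. Second, your closing remark about Assumption~\ref{assumption} is not quite right: the loop $\gamma_v$ guaranteed there lies in $G_{n+1}\setminus(G_n\cup\partial G_{n+1})$, so following it does \emph{not} hit $\partial G_{n+1}$, and the bound $\beta$ does not furnish your escape constant $c_{n+1}$. This does not affect your main argument, which correctly notes that the assumption is not needed.
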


\begin{proof}
Let~$n \in \mathbb N^*$.
Almost surely,~$T_n$ is finite because almost surely if~$k \geq 1$, there exists a time such that the random walk makes~$k$ consecutive steps in the same direction. Therefore, the random walk exits every finite ball in finite time almost surely.
Since the events~$(T_n \geq k)$ are decreasing in~$k$ (for a fixed~$n$) with respect to inclusion, the monotone convergence theorem implies
$$ \lim_{k \rightarrow \infty} \Prob_{v_0}(T_n \geq k) = \Prob_{v_0}(\bigcap_{k \geq 1} \{T_n \geq k \}) =  \Prob_{v_0}(T_n = \infty),$$
which concludes the proof.
\end{proof}

Let~$(X_n)$ be a simple random walk on~$G$ starting from~$v_0$ and let~$(Y_n)$ be a sequence of independent random variables of uniform law on~$[0,1]$, which are independent of the random variables~$X_n$. 

We want to define a~$p$-loop erased random walk such that, if at time~$n$, the random walk~$(X_n)$ closes a loop~$\gamma_n$, the loop is kept if~$Y_n \leq p(\gamma_n)$ and erased else.  

Given~$(X_n, Y_n)$, we construct a sequence of random walks~$((Z_n^k)_n)_k$ as follows. 
We define recursively~$(Z_n^k)_{n \geq 1}$ for~$k \in \mathbb N^*$. Let~$(Z_n^1)=(X_n)$ and given~$(Z_n^k)_n$, let us consider the first time~$n_k$ such that~$Z_n^k$ closes a loop that is to say 
$$ n_k =  \min \{j > n_{k-1} \in \mathbb N^*  \vert Z^k_j \in \{Z^k_0,\ldots,Z^k_{j-1} \} \}.$$
Then, let~$n_k'$ be the time of the beginning of the loop, that is to say 
$$n_k' = \min\{ j \in \mathbb N, Z^k_j = Z^k_{n_k} \}.$$
 Therefore, the loop which is closed at time~$n_k$ is the loop~$\gamma_{n_k}:= (Z^k_{n_k'},\ldots ,Z^k_{n_k})$
Finally, if~$Y_{n_k} \geq p(\gamma_{n_k})$, then define for every~$n \in \mathbb N$,
$$ Z_n^{k+1} =
\begin{cases}
Z^k_{n_k} \quad \text{if} \ n_k' \leq n \leq n_k  \\
Z_n^k \quad \text{else}.  
\end{cases}
$$ 
$(Z_n^{k+1})$ is obtained from~$(Z_n^k)$ erasing the loop~$\gamma_{n_k}$.

Otherwise, if~$Y_{n_k} \leq p(\gamma_{n_k})$, for every~$m \geq k+1$ and~$n \in \mathbb N$, let
$$ Z_n^m =
\begin{cases}
Z^k_{n} \quad \text{if} \ n \leq n_k  \\
Z_{n_k}^k \quad \text{else}  
\end{cases}
$$ 
$(Z_n^m)$ is obtained from~$(Z_n^k)$ stopping the random walk at time~$n_k$.

\begin{definition}{\label{rooting time}}
If~$(X_n)$ is a simple random walk on~$G$ starting from~$v_0$ and~$(Y_n)$ is a sequence of independent random variables of uniform law on~$[0,1]$, which are independent of the~$X_n$, we say that~$(n_k)_{n \geq 1}$ is the sequence of random times where the random walk~$(X_n)$ closes a loop~$\gamma_{n_k}$. 
Let~$T_r$ be called the \emph{random rooting time} for~$(X_n, Y_n)$ that is to say the first time where a loop is kept:
$$ T_r := \min \{n_k \vert Y_{n_k} \leq p(\gamma_{n_k}) \},$$
where~$\min \emptyset = + \infty$.
If~$k$ is such that~$T_r = n_k$, then let~$(Z_n)_{n \leq T_r} = (Z_n^k)_{n \leq T_r}$ be called the~\emph{$p$-loop erased random walk} obtained from~$(X_n,Y_n)$. 
\end{definition}

Let us emphasize that if~$T_r$ is finite, then there exists a~$k$ such that~$T_r = n_k$ and then the~$p$-loop erased random walk~$(Z_n)_{n \leq T_r}$ is well defined and is obtained from~$(X_n)_{n \leq T_r}$, erasing every loop excepted the last one. Here, the loop-erased random walk is indexed on the same time set than the random walk~$(X_n)$. Nevertheless,~$Z_n$ does not depend only on~$(X_k)_{k \leq n}$.

 \subsection{The rooting time is almost surely finite.}

We will show in this subsection that the rooting time~$T_r$ is a stopping time and that almost surely, it is finite.

\begin{definition}
Let $(\mathcal{F}_n)_n$ be the filtration adapted to the process~$((X_n, Y_n))_n$  that is defined by
$$\mathcal F_n = \sigma(X_0,\ldots ,X_n, Y_0,\ldots ,Y_n),$$ which is the smallest sigma-field which makes the~$(X_i)_{0 \leq i \leq n}, (Y_i)_{1 \leq i \leq n}$ measurable. 
\end{definition}
 
 \begin{lemma}\label{stoppingtime}
 For every~$m \in \mathbb N^*$, the hitting time~$T_m$ is a stopping time with respect to the filtration~$(\mathcal F_n)_n$. The rooting time~$T_r$ is also a stopping time with respect to the filtration~$(\mathcal F_n)_n$. 
Moreover, for every~$n \in \mathbb N$, if we consider the~$\sigma$-field adapted to the stopping time~$T_n$, defined by 
$$ \mathcal F_{T_n} = \{A \in \mathcal F : \forall k \geq 0, \{T_n \leq k \} \cap A \in \mathcal F_k \},$$ 
then, the event~$\{T_n < T_r\}$ is in~$\mathcal F_{T_n}$. 
\end{lemma}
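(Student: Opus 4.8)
The plan is to handle the three assertions in order; the first and last are formal manipulations with stopping times, and the substance lies in the second, the measurability of~$T_r$.

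For $T_m=T_{\partial G_m}$, it suffices to observe that $\{T_m\le k\}=\bigcup_{j=0}^{k}\{X_j\in\partial G_m\}$ and that each $\{X_j\in\partial G_m\}$ belongs to $\sigma(X_0,\dots,X_j)\subseteq\mathcal F_k$; hence $\{T_m\le k\}\in\mathcal F_k$ for every $k$, so $T_m$ is a stopping time.

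For $T_r$, the point is that the loop-erasure skeleton is already adapted to $(\mathcal F_n)$, the variables $(Y_n)$ entering only through the choice of which loop is eventually kept. Let $(\hat n_k,\hat\gamma_k)_{k\ge 1}$ denote the successive loop-closing times and closed loops produced by the recursive construction of $(Z^k_n)$ preceding Definition~\ref{rooting time} \emph{if every loop is erased} (that is, the ordinary loop erasure of $(X_n)$). I would first verify, by induction on $k$ directly from that construction, that the walk obtained after erasing the first $k-1$ loops, restricted to $\{0,\dots,m\}$, is a deterministic function of $(X_0,\dots,X_m)$ for every $m$: passing from one such walk to the next only rewrites coordinates inside the interval $[\hat n_k',\hat n_k]$, which lies in the past of $\hat n_k$. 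From this it follows that $\hat n_1<\hat n_2<\cdots$, that $\{\hat n_k=\ell\}\in\sigma(X_0,\dots,X_\ell)$, and that $\hat\gamma_k$ is $\sigma(X_0,\dots,X_\ell)$-measurable on the event $\{\hat n_k=\ell\}$. Since in the actual construction loops are erased until the first kept one, $T_r=\hat n_{k^\ast}$ with $k^\ast=\min\{k\ge 1:\ Y_{\hat n_k}\le p(\hat\gamma_k)\}$ and $\min\emptyset=\infty$. Hence, for every $k$,
\[
\{T_r\le k\}=\bigcup_{j\ge 1}\ \bigcup_{\substack{\ell_1<\cdots<\ell_j\le k\\ \gamma_1,\dots,\gamma_j}}\bigl(\{\hat n_i=\ell_i,\ \hat\gamma_i=\gamma_i,\ i\le j\}\cap\{Y_{\ell_i}>p(\gamma_i),\ i<j\}\cap\{Y_{\ell_j}\le p(\gamma_j)\}\bigr),
\]
the innermost union ranging over the countably many possible loops $\gamma_1,\dots,\gamma_j$. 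Each set in this countable union is the intersection of a set in $\sigma(X_0,\dots,X_{\ell_j})\subseteq\mathcal F_k$ with a set in $\sigma(Y_{\ell_1},\dots,Y_{\ell_j})\subseteq\mathcal F_k$, hence belongs to $\mathcal F_k$; therefore $\{T_r\le k\}\in\mathcal F_k$ and $T_r$ is a stopping time. (The boundary case $Y_{\hat n_k}=p(\hat\gamma_k)$ has probability $0$ and may be assigned either way.)

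Finally, for $\{T_n<T_r\}\in\mathcal F_{T_n}$: by definition of $\mathcal F_{T_n}$ it is enough to show $\{T_n\le k\}\cap\{T_n<T_r\}\in\mathcal F_k$ for all $k$, and
\[
\{T_n\le k\}\cap\{T_n<T_r\}=\bigcup_{j=0}^{k}\bigl(\{T_n=j\}\cap\{T_r\ge j+1\}\bigr),
\]
where $\{T_n=j\}=\{T_n\le j\}\setminus\{T_n\le j-1\}\in\mathcal F_j$ by the first part and $\{T_r\ge j+1\}=\{T_r\le j\}^c\in\mathcal F_j$ by the second part, so the finite union lies in $\mathcal F_k$. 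The main obstacle is the inductive bookkeeping in the second step: one must check carefully that loop erasure never consults $(X_n)$ beyond the current loop-closing time, so that the skeleton $(\hat n_k,\hat\gamma_k)_k$ is genuinely adapted to $(\mathcal F_n)$ — once this is established, everything else is a routine juggling of $\sigma$-fields.
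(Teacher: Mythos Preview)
Your proof is correct and follows the same route as the paper's, with the same decompositions for $T_m$ and for $\{T_n<T_r\}\in\mathcal F_{T_n}$. For $T_r$ the paper simply asserts that $\{T_r\le k\}$ depends only on $((X_i,Y_i))_{i\le k}$ ``from the construction of the rooting time''; your inductive bookkeeping showing that the always-erase skeleton $(\hat n_k,\hat\gamma_k)$ is adapted to $(\mathcal F_n)$ is precisely the justification the paper leaves implicit.
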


\begin{proof}
Let~$n \in \mathbb N$. The events~$\{T_n \geq k\} = \{X_1,\ldots , X_k \in G_n \backslash \partial G_n  \}$ are measurable with respect to~$\mathcal F_k$ and therefore~$T_n$ is a stopping time.  

From the construction of the rooting time, the event~$\{T_r \leq k\}$ only depends on the steps of the random walk before time~$k$, that is~$((X_n,Y_n))_{n \leq k}$ and therefore the event~$\{T_r \geq k\}$ is in~$\mathcal F_{k}$.

The event~$\{T_n < T_r\}$ is in~$\mathcal F_{T_n}$ because if~$k \geq 0$, 
$$ \{T_n \leq k \} \cap \{T_n < T_r\} = \bigcup_{1 \leq i \leq k} \left( \{T_n = i\} \cap \{T_r > i \}   \right) \in \mathcal F_k.$$
which concludes the proof.
\end{proof}

Let us emphasize that~$T_r$ is a stopping time for the filtration~$(\mathcal F_n)$ even if~$(Z_n)$ is not adapted to the filtration~$(\mathcal F_n)$. Indeed,~$Z_n$ depends on~$(X_k)$ for~$k \geq n$. Lemma~\ref{stoppingtime} is a useful tool to show that the rooting time is almost surely finite for a simple random walk starting from~$v_0$.

\begin{lemma}\label{rooting}
Under Assumption \ref{assumption}, the rooting time~$T_r$ for a simple random walk~$(X_n)$ starting from~$v_0$ and~$(Y_n)$ as defined in Definition~\ref{rooting time} is finite almost surely and the sequence~$(\Prob_{v_0}(T_r > T_n))_n$ decays exponentially fast to 0 with~$n$. More precisely, there exists~$\delta \in ]0,1[$ such that
$$  \Prob_{v_0}(T_{n} < T_r) \leq \delta^n.~$$
\end{lemma}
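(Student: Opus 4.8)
The plan is to exploit Assumption~\ref{assumption} to get a uniform lower bound on the probability that a loop is kept during the excursion of the random walk between two successive boundaries $\partial G_n$ and $\partial G_{n+1}$, and then to chain these bounds using the strong Markov property at the stopping times $T_n$. First I would observe, using Lemma~\ref{stoppingtime}, that $\{T_n < T_r\}$ is measurable with respect to $\mathcal F_{T_n}$, so that on this event I may restart the analysis of the $p$-loop erased random walk from the vertex $X_{T_n} \in \partial G_n$. The point is that, conditionally on $\{T_n < T_r\}$ and on $\mathcal F_{T_n}$, the future increments $(X_{T_n + k}, Y_{T_n+k})_{k \geq 0}$ form a fresh simple random walk started at $X_{T_n}$ together with fresh independent uniforms, by the strong Markov property; and the event ``a loop is kept during the time interval $[T_n, T_{n+1}]$'' depends only on these future increments up to time $T_{n+1}$.

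Next I would quantify, for an arbitrary starting vertex $v \in \partial G_n$, the conditional probability that no loop is kept before the walk reaches $\partial G_{n+1}$. By Assumption~\ref{assumption}, there is a loop $\gamma_v$ contained in $G_{n+1} \setminus (G_n \cup \partial G_{n+1})$ with $p(\gamma_v) \geq \alpha$ and $\Prob_v\bigl((X_1, \ldots, X_{|\gamma_v|}) = \gamma_v\bigr) > \beta$. On the event that the walk's first $|\gamma_v|$ steps trace out exactly $\gamma_v$, the walk closes the loop $\gamma_v$ (it stays in $G_{n+1}\setminus\partial G_{n+1}$, so $T_{n+1}$ has not yet occurred, and it has not hit $\partial G_n$ either since $\gamma_v$ avoids $G_n$), and this loop is then kept with probability at least $\alpha$ because the corresponding uniform variable is $\leq \alpha$ with probability $\alpha$, independently of the walk. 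Hence
$$ \Prob_v\bigl(\text{some loop is kept before } \partial G_{n+1} \text{ is reached}\bigr) \geq \alpha\beta =: 1 - \delta, $$
where I should check $\alpha, \beta \in (0,1]$ can be taken so that $\delta \in (0,1)$ (shrinking $\alpha$ if necessary). Consequently $\Prob_v(T_r > T_{n+1} \mid T_n < T_r, \mathcal F_{T_n}) \leq \delta$ uniformly in $v \in \partial G_n$ and in the configuration, which gives
$$ \Prob_{v_0}(T_{n+1} < T_r) \leq \delta \, \Prob_{v_0}(T_n < T_r). $$
Iterating from $n = 1$ (or $n = 0$, using $\Prob_{v_0}(T_0 < T_r) \leq 1$) yields $\Prob_{v_0}(T_n < T_r) \leq \delta^n$. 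Since $T_n < \infty$ almost surely by Lemma~\ref{hitting}, the event $\{T_r = \infty\}$ is contained in $\bigcap_n \{T_n < T_r\}$, so $\Prob_{v_0}(T_r = \infty) \leq \inf_n \delta^n = 0$, and $T_r$ is almost surely finite.

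The main obstacle I anticipate is the careful bookkeeping needed to justify the "restart" rigorously: one must be sure that the loops closed and possibly kept during $[T_n, T_{n+1}]$ are genuinely a function of the post-$T_n$ increments, which requires noting that $\gamma_v$ lies strictly between the two boundaries so that no earlier-kept loop interferes and the loop-erasure procedure applied to the post-$T_n$ walk agrees with the global one on this time window --- in particular, that reaching $\partial G_n$ before $\partial G_{n+1}$ does not terminate the walk (only $T_r$ and the outer exhaustion matter for the limiting construction, not $\partial G_n$). A secondary subtlety is that Assumption~\ref{assumption} as stated gives a loop $\gamma_v$ depending on $v$ and $n$, so one uses it pointwise for each $v \in \partial G_n$ and then takes the infimum of the resulting lower bounds, which is still at least $\alpha\beta$ by the uniformity of $\alpha$ and $\beta$. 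Everything else is a routine application of the strong Markov property and Lemma~\ref{hitting}.
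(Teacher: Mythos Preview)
Your proposal is correct and follows essentially the same route as the paper: the strong Markov property at $T_n$, the uniform $\alpha\beta$ lower bound from Assumption~\ref{assumption} on the event that the walk immediately traces $\gamma_{X_{T_n}}$ and the associated uniform variable falls below $p(\gamma_{X_{T_n}})$, and the induction yielding $\Prob_{v_0}(T_n < T_r)\le\delta^n$ with $\delta=1-\alpha\beta$. The bookkeeping you flag as the main obstacle is exactly what the paper spells out: it tracks the index $i$ with $n_i\le T_n<n_{i+1}$, notes that $(Z^{i+1}_k)_{k\le T_n}$ lies in $G_n$ while the traced loop $\gamma_{X_{T_n}}$ lies in $G_{n+1}\setminus G_n$, and concludes that the next loop-closing time in the global erasure is precisely $n_{i+1}=T_n+|\gamma_{X_{T_n}}|$, so $T_r=n_{i+1}<T_{n+1}$ on the good event.

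One small difference worth noting: your argument for almost-sure finiteness of $T_r$ is more direct than the paper's. You use $\{T_r=\infty\}\subset\{T_n<T_r\}$ (valid since $T_n<\infty$ a.s.\ by Lemma~\ref{hitting}) and let $n\to\infty$. The paper instead shows $\Prob_{v_0}(T_r\ge k)\to 0$ by splitting on $\{T_n\ge k\}$ versus $\{T_n<k\}$ for a suitably large fixed $n$; this gives the extra information that the tail $\Prob_{v_0}(T_r\ge k)$ is small, but for the stated lemma your shortcut suffices.
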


\begin{proof}
Let~$n \in \mathbb N^*$ be fixed. The process~$((X_k,Y_k))_{k \geq 0}$ satisfies the strong Markov property. Therefore, conditional on the event~$\{ T_n < \infty \}$ which is almost sure by Lemma~\ref{hitting}, for every~$k \geq 0$, the pair of random variables~$(X_{T_n +k}, Y_{T_n+k})$ is independent of~$\mathcal F_{T_n}$ given~$(X_{T_n}, Y_{T_n})$.

From Assumption~\ref{assumption}, there exists a loop~$\gamma_{X_{T_n}}$ which lies inside~$G_{n+1} \backslash (G_n \cup \partial G_{n+1})$ with weight larger than~$\alpha$ and such that the probability that a random walk~$(X_{T_n+k})_k$ makes this loop~$\gamma_{X_{T_n}}$ is greater than~$\beta$.

Let us denote by~$A_{X_{T_n}}$ the event that the random walk~$(X_{T_n+k})_k$ makes this loop~$\gamma_{X_{T_n}}$, and let us denote by~$B_{X_{T_n}}$ the event~$\{ Y_{T_n +\vert \gamma_{X_{T_n}} \vert} \leq p(\gamma_{X_{T_n}}) \}$. Conditional on~$X_{T_n}$, the events~$A_{X_{T_n}}$ and~$B_{X_{T_n}}$ are independent and have probabilities~$\Prob_{X_{T_n}}(\gamma_{X_{T_n}}) \geq \beta$ and~$p(\gamma_{X_{T_n}}) \geq \alpha$.  

The event~$A_{X_{T_n}} \cap \{ Y_{T_n + \vert \gamma_{X_{T_n}} \vert } \leq p(\gamma_{X_{T_n}}) \}$ for the random walk~$(X_{T_n+k}, Y_{T_n+k})_{k \geq 0}$ starting from~$(X_{T_n}, Y_{T_n} )\in \partial G_n$ has a probability greater than~$\alpha \beta$. Conditional on~$(X_{T_n}, Y_{T_n} )$, it is independent of~$\mathcal F_{T_n}$, therefore it is independent of the event~$\{ T_{n} < T_r \}$. 

Let us show that conditional on~$T_n < T_r$, if the event~$A_{X_{T_n}} \cap \{ Y_{T_n + \vert \gamma_{X_{T_n}} \vert } \leq p(\gamma_{X_{T_n}}) \}$ is satisfied, then the event~$\{T_{n+1} > T_r\}$ is satisfied. The idea is that on this event, the random walk keeps a loop before reaching~$\partial G_{n+1}$ and therefore~$T_{n+1} > T_r$.

Let~$i$ be the largest integer such that~$n_i \leq T_n$. Then, by construction of the~$p$-loop-erased random walk, assuming~$T_r > T_n \geq n_i$,~$(Z_k^{i+1})$ coincides with~$(X_k)$ after time~$n_i$ and therefore after time~$T_n$. For~$k \leq T_n$,~$Z_k^{i+1} \in \{X_0,\ldots ,X_{T_n}\}$ by construction and therefore~$Z_k^{i+1} \in G_n$. 

If the event~$A_{X_{T_n}} \cap \{ Y_{T_n + \vert \gamma_{X_{T_n}} \vert } \leq p(\gamma_{X_{T_n}}) \}$ is satisfied, then, for~$k$ between $T_n +1$ and~$T_n + \vert \gamma_{X_{T_n}} \vert$, we have~$Z_k^{i+1} \in G_{n+1} \backslash (G_n \cup \partial G_{n+1})$, and therefore, for such~$k$,
$$Z_k^{i+1} \notin (Z_0^{i+1},\ldots ,Z_{T_n}^{i+1}).$$ 

Since we have~$n_{i+1} \geq T_n$ by assumption on~$i$, we have necessarily~$n_{i+1}=T_n + \vert \gamma_{X_{T_n}} \vert$. Since the event~$\{ Y_{T_n + \vert \gamma_{X_{T_n}} \vert } \leq p(\gamma_{X_{T_n}}) \}$ is satisfied by assumption and~$T_r > n_i$, we have~$T_r = n_{i+1} = T_n + \vert \gamma_{X_{T_n}} \vert$, and since~$A_{X_{T_n}}$ is satisfied, for~$T_n +1 \leq k \leq T_n + \vert \gamma_{X_{T_n}} \vert$, we have~$X_k \in G_{n+1} \backslash (G_n \cup \partial G_{n+1})$ and therefore~$T_{n+1} > T_n + \vert \gamma_{X_{T_n}} \vert = T_r$.

Therefore, denoting by~$\delta := 1- \alpha \beta < 1$,
\[
\begin{split}
 \Prob_{v_0}(T_{n+1} < T_r \ \vert \  T_{n} < T_r) & \leq 1-\Prob_{(X_{T_n},Y_{T_n})}( A_{X_{T_n}} \cap \{Y_{T_n +\vert \gamma_{X_{T_n}} \vert } \leq \alpha \} \ \vert \  T_{n} < T_r)  \\
& = 1-\Prob_{(X_{T_n},Y_{T_n})}(A_{X_{T_n}} \cap \{Y_{T_n +\vert \gamma_{X_{T_n}} \vert } \leq \alpha \})  \\
& = 1-\Prob_{X_{T_n}}(A_{X_{T_n}})\Prob(Y_{T_n +\vert \gamma_{X_{T_n}} \vert } \leq \alpha )  \\
& \leq 1- \alpha \beta  = \delta.  
\end{split}
\]
This inequality holds for every~$n \in \mathbb N^*$ and~$\delta$ does not depend on~$n$.
Then, writing
$$ \Prob_{v_0}(T_{n+1} < T_r) = \Prob_{v_0}(T_{n+1} < T_r \ \vert \   T_{n} < T_r) \Prob_{v_0}(T_{n} < T_r),$$
we obtain by induction on~$n$ the exponential decay of the following probability :
$$ \Prob_{v_0}(T_{n} < T_r) \leq \delta^n.~$$

Let~$\varepsilon > 0$. For fixed~$n$ large enough,~$\delta^n \leq \varepsilon$. 

For~$k$ large enough, we have~$\Prob_{v_0}(T_n \geq k) \leq \varepsilon$. Then for~$k$ large enough, we have

\[
\begin{split}
 \Prob_{v_0}(T_r \geq k) & = \Prob_{v_0}(\{T_r \geq k\} \cap \{T_n \geq k\}) + \Prob_{v_0}(\{T_r \geq k\} \cap \{T_n \leq k-1\})  \\
 & \leq  \Prob_{v_0}(T_n \geq k) + \Prob_{v_0}(T_r > T_n) \leq 2\varepsilon 
 \end{split}
 \]
Therefore we have shown that for every~$\varepsilon >0$, for~$k$ large enough,~$ \Prob_{v_0}(T_r \geq k)  < 2 \varepsilon$, which means that 
$$\lim_{k \rightarrow \infty} \Prob_{v_0}(T_r \geq k) = 0 .$$
Therefore, from the monotone convergence theorem, 
$$ \Prob_{v_0}(T_r = \infty) = \Prob_{v_0}(\bigcap_{k \geq 1} \{T_r \geq k\}) = \lim_{k \rightarrow \infty} \Prob_{v_0}(T_r \geq k) = 0~$$ 

This concludes the proof.
\end{proof}

The proof of Lemma~\ref{rooting} can be adapted to show that the rooting time is almost surely finite for a random walk starting from another vertex of~$G$, even if this vertex is not in~$G_1$, as follows. 

\begin{lemma}{\label{translated}}
Let~$(X_n^{(x)})_n$ be a random walk starting from~$x \in G$ and and let $(Y_n)_n$ be the process defined in Definition~\ref{rooting time}. Under Assumption \ref{assumption}, the rooting time~$T_r$ for~$(X_n^{(x)})_n$ is finite almost surely and~$\Prob_x(T_r > T_n)$ decays exponentially fast to 0 with~$n$.
\end{lemma}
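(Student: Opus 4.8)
The plan is to reduce the case of a random walk started from an arbitrary vertex $x \in G$ to the situation already handled in Lemma~\ref{rooting}. First I would note that $x$ belongs to $G_{m}$ for some $m \geq 1$, since the $(G_n)$ exhaust $G$; equivalently $x \in G_n$ for all $n$ large enough. The point of Assumption~\ref{assumption} is that it is a statement about every vertex of $\partial G_n$ for every $n$, and of the exhaustion being nested, so the mechanism driving the proof of Lemma~\ref{rooting} — namely that upon first hitting $\partial G_n$ the walk has, conditionally and independently of the past $\sigma$-field $\mathcal F_{T_n}$, probability at least $\alpha\beta$ of immediately closing and keeping a loop inside $G_{n+1}\setminus(G_n\cup\partial G_{n+1})$, hence rooting before reaching $\partial G_{n+1}$ — does not use anything special about $v_0\in G_1$ except that the hitting times $T_n=T_{\partial G_n}$ are defined and almost surely finite.

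So the key steps are: (1) define $(T_n)_n$, $\mathcal F_n$, $\mathcal F_{T_n}$, the $p$-loop erased walk and $T_r$ exactly as before but for the walk $(X_n^{(x)})_n$ started at $x$; (2) observe that Lemma~\ref{hitting} goes through verbatim for a walk started at $x$ (the argument — that almost surely the walk makes arbitrarily many consecutive steps in a fixed direction and hence exits every finite ball — is insensitive to the starting vertex), so each $T_n$ is finite a.s.; (3) for $n$ large enough that $x \in G_n$ — actually for all $n$, since if $x\notin G_n\setminus\partial G_n$ then $T_n=0$ and the bound is vacuous or the event $\{T_n<T_r\}$ is handled trivially — repeat the conditional estimate of Lemma~\ref{rooting}: by the strong Markov property of $((X_k^{(x)},Y_k))_k$ at the stopping time $T_n$, conditionally on $\{T_n<\infty\}$ and on $(X_{T_n}^{(x)},Y_{T_n})\in\partial G_n$, the increments after $T_n$ are independent of $\mathcal F_{T_n}\supseteq\{T_n<T_r\}$, and Assumption~\ref{assumption} supplies a loop $\gamma_{X_{T_n}}$ with $p(\gamma)\geq\alpha$ and $\Prob_{X_{T_n}}$-probability $>\beta$ of being traversed immediately, whence $\Prob_x(T_{n+1}<T_r\mid T_n<T_r)\leq 1-\alpha\beta=:\delta<1$; (4) chain these by induction to get $\Prob_x(T_n<T_r)\leq\delta^{n-n_0}$ (with $n_0$ the least $n$ with $x\in G_n\setminus\partial G_n$), which is the asserted exponential decay; (5) combine with Lemma~\ref{hitting} applied to $(X_n^{(x)})_n$ exactly as in the last display of the proof of Lemma~\ref{rooting} to conclude $\Prob_x(T_r=\infty)=0$.

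In fact the cleanest way to phrase this is that the proof of Lemma~\ref{rooting} never invoked $v_0\in G_1$ in any essential place: the only role of $v_0$ was to give a starting point for which the $T_n$ are well-defined hitting times, and this holds for any starting vertex. I would therefore write the proof as: "The proof of Lemma~\ref{rooting} applies verbatim with $v_0$ replaced by $x$; the only point to check is that Lemma~\ref{hitting} holds for a walk started at $x$, which it does by the same argument, and that the conditional estimate $\Prob_x(T_{n+1}<T_r\mid T_n<T_r)\leq\delta$ from Assumption~\ref{assumption} is valid for every $n$ with $\partial G_n$ reachable, which is every $n$ such that $x$ is not already separated, and in any case for all $n$ large enough." One then gets $\Prob_x(T_r>T_n)\leq\delta^{n}$ up to a harmless shift of index depending on where $x$ sits in the exhaustion.

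The main obstacle, such as it is, is bookkeeping rather than mathematics: making precise the phrase "starts the induction from whatever $n$ has $x\in G_n$" and checking that for small $n$ (where possibly $x\notin G_n$, or $x\in\partial G_n$, so that $T_n=0$) the statement is either trivially true or the conditional estimate still makes sense. One must also be slightly careful that the loop $\gamma_{X_{T_n}}$ and the time $T_n+|\gamma_{X_{T_n}}|$ at which the decisive uniform variable is read off are the same combinatorial objects as in Lemma~\ref{rooting}, so that the argument identifying $\{T_{n+1}>T_r\}$ on the event $A_{X_{T_n}}\cap\{Y_{T_n+|\gamma_{X_{T_n}}|}\leq p(\gamma_{X_{T_n}})\}$ transfers unchanged. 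None of this is delicate; the exponential decay constant $\delta=1-\alpha\beta$ is literally the same.
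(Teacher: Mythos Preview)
Your proposal is correct and follows essentially the same approach as the paper: pick $m_x$ with $x\in G_{m_x}$, observe that the conditional estimate $\Prob_x(T_{n+1}<T_r\mid T_n<T_r)\leq\delta$ from the proof of Lemma~\ref{rooting} holds for all $n\geq m_x$, chain to get $\Prob_x(T_n<T_r)\leq\delta^{n-m_x}$, and conclude as before. The paper's proof is just a terser version of what you wrote, with the same shifted exponent and the same appeal to the end of Lemma~\ref{rooting} for $\Prob_x(T_r=\infty)=0$.
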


\begin{proof}
Notice that~$x$ is not anymore in~$G_1$ and therefore the bound~$\Prob_x(T_n \leq T_r ) \leq \delta^n$ does not hold. 

Nevertheless, if~$m_x$ is such that~$x \in G_{m_x}$, then for~$n \geq m_x$, the proof of Lemma~\ref{rooting} shows that for the loop-erased random walk starting from~$x$, 
$$ \Prob_x(T_{n+1} < T_r \vert T_{n} < T_r) \leq \delta~$$
and therefore for~$n \geq m_x$,
$$ \Prob_x(T_n < T_r) \leq \delta^{n-m_x}~$$
Therefore~$\Prob_x(T_n \leq T_r )$ tends to 0 exponentially fast with~$n$ and an argument similar to the one given in the proof of Lemma~\ref{rooting} gives that~$T_r$ is finite almost surely. 
\end{proof}

Lemma \ref{translated} shows that if we start a simple random walk on~$G$ from a vertex~$v$, almost surely~$T_r$ is finite. It implies that almost surely the sequence~$((Z_n^k)_{n \geq 0})_{k \geq 0}$ is constant eventually and its limit~$(Z_n)_{n \geq 0}$ is well defined with~$(Z_n)_{n \geq 0}$ constant for~$n \geq T_r$.  

\medskip

In the following, we define~$p$-loop-erased random walks with wired boundary conditions in order to adapt the usual Wilson algorithm to sample cycle-rooted spanning forests of infinite graphs.

\subsection{$p$-loop-erased random walk with a boundary condition}

Let us briefly recall our current notations. We still assume that~$(X_n)$ is a simple random walk on~$G$ starting from any vertex~$v$,~$(Y_n)$ is a sequence of independent random variables of uniform law in~$[0,1]$, which are independent of the~$X_n$ and~$W \subset V$ is a deterministic set of vertices.

We define in this subsection a~$p$-loop erased random walk obtained from~$(X_n,Y_n)_{n \geq 0}$ with the boundary condition~$W$.

\begin{definition}
Let~$T_W$ be the hitting time of~$W$, and let~$T_r$ be the rooting time of~$(X_n,Y_n)_{n \geq 0}$. Let~$T_f = \min(T_r, T_W)$ be called the \emph{ending time} of~$(X_n,Y_n)_{n \geq 0}$ with boundary condition~$W$.
\end{definition}

Given~$(X_n,Y_n)_{n \leq T_W}$, we construct a~$p$-loop erased random walk~$(Z^W_n)_n$ with boundary conditions~$W$ as follows. We define recursively~$n_k^W$ and~$(Z_n^{k,W})_{n \geq 0}$ for~$k \in \mathbb N^*$. Let~$(Z^{1,W}_n)_{n \leq T_W} = (X_n)_{n \leq T_W}$ and~$n_0^W=0$. 

Then, we define recursively a sequence~$((Z_n^{i,W})_{n \leq T_W})_{ i \geq 1}$ and a sequence~$(n_i)_{i \leq k}$ as follows. Let~$n_i^W : (X_0,\ldots ,X_k) \mapsto \min \{n_{i-1}^W < j \leq T_W \vert Z^{i,W}_j \in \{Z^{i,W}_0,\ldots , Z^{i,W}_{j-1} \} \}$ be the~$i$-th loop-closing time before reaching~$W$, where~$\min\emptyset=\infty$.
If~$n_i^W(X_0,\ldots ,X_k)=\infty$, let~$Z^{i+1,W} = Z^{i,W}$. 

Else, let~$n'^W_i$ be the first time~$j < n_i^W$ such that~$Z^{i,W}_{n_i^W}=Z^{i,W}_{n_i'^W}$ and if~$Y_{n_i^W}\leq p(\gamma_{n_i^W})$, let for every~$m \geq i+1$, 
$$Z_n^{m,W} = 
\begin{cases}
Z_n^{i,W} \quad \text{if} \ n \leq n_i^W,  \\
Z_{n_i^W}^{i,W} \quad \text{else},  
\end{cases}$$  
and otherwise, let for~$n \leq T_W$
$$Z_n^{i+1,W} = \begin{cases}
Z^{i,W}_{n_i^W} \quad \text{if} \ n_i'^W \leq n \leq n_i^W,  \\
Z_n^{i,W} \quad \text{else}.  
\end{cases}$$

Notice that~$Z_n^{i+1,W}$ is obtained from~$Z_n^{i,W}$ by erasing the first loop which ends before~$T_W$. While~$i$ is small enough such that~$n_i \leq T_W$, we have~$n_i^W = n_i$ and~$Z^{i+1,W}=Z^{i+1}$. 

\begin{proposition}\label{endingtime}
Almost surely,~$T_f$ is finite and~$((Z_n^{i,W})_{n \leq T_f})_{i \geq 0}$ is constant eventually. 
We define the~$p$-loop erased random walk ($p$-LERW) with boundary conditions~$W$ as 
$$(Z^W_n)_{n \leq T_f} = \lim_{i \rightarrow \infty} (Z_n^{i,W})_{n \leq T_f}.$$ 
\begin{itemize}
\item
If~$T_f = T_W$,~$(Z^W_n)_{n \leq T_f} = (Z^{i_f}_n)_{n \leq T_W}~$ where~$i_f = \min \{i \vert n_i > T_W\}$.  
\item
If~$T_f = T_r$,~$(Z^W_n)_{n \leq T_f} = (Z_n)_{n \leq T_f}$ where~$(Z_n)$ is the~$p$-loop erased random walk without any boundary condition. 
\end{itemize}
\end{proposition}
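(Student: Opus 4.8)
The plan is to reduce finiteness of $T_f$ to the already established finiteness of the rooting time, and then to run the two loop-erasure recursions (with and without the boundary) in parallel in order to get both the eventual stabilization and the identification of the limit.

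First, since $T_f = \min(T_r, T_W) \le T_r$, Lemma~\ref{translated} applied to the simple random walk $(X_n^{(v)})$ gives $T_f < \infty$ $\Prob_v$-almost surely, so from now on I work on the full-measure event $\{T_r < \infty\}$. On this event exactly one of two situations occurs: either $T_r \le T_W$, and then $T_f = T_r$; or $T_W < T_r$, and then $T_f = T_W$ and, since $T_r < \infty$, also $T_W < \infty$. I would treat these cases separately, relying throughout on the observation recorded just after the construction: \emph{as long as $n_i \le T_W$, one has $n_i^W = n_i$ and $Z^{i+1,W} = Z^{i+1}$.}

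In the case $T_r \le T_W$, write $T_r = n_k$, so that the loop closing at time $n_k$ is kept while those closing at $n_1,\dots,n_{k-1}$ are erased. Since $(n_i)_i$ is increasing and $n_k \le T_W$, the observation gives $Z^{i,W} = Z^i$ on $[0,T_W]$ for all $i \le k$, and at step $k$ the ``keep'' branch freezes both recursions at time $n_k$, so $Z^{m,W}_n = Z^k_n$ for all $m \ge k$ and all $n \le n_k$. Restricting to $n \le T_f = n_k$, the sequence $\bigl((Z^{i,W}_n)_{n\le T_f}\bigr)_i$ is thus constant for $i \ge k$, with value $(Z^k_n)_{n\le n_k} = (Z_n)_{n\le T_f}$, the asserted value when $T_f = T_r$. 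In the case $T_W < T_r$, no loop closing at a time $\le T_W$ is kept, so the boundary recursion only ever erases loops of the finite path $(X_0,\dots,X_{T_W})$; that path contains only finitely many loops, hence after finitely many steps the recursion reaches an index $i_f$ with $n_{i_f}^W = \infty$ and then stays constant, which is the claimed stabilization. Because the ``keep'' branch is never triggered before $T_W$, the two recursions proceed step by step in the same way: $n_i^W = n_i$ for $i < i_f$ and $Z^{i,W} = Z^i$ on $[0,T_W]$ for $i \le i_f$, while $n_{i_f}^W = \infty$ is equivalent to $n_{i_f} > T_W$; hence $i_f = \min\{i \mid n_i > T_W\}$ and $(Z^W_n)_{n\le T_f} = (Z^{i_f,W}_n)_{n\le T_W} = (Z^{i_f}_n)_{n\le T_W}$, the asserted value when $T_f = T_W$.

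The only genuinely external input is Lemma~\ref{translated}; the rest is the combinatorics of chronological loop erasure. I expect the point needing the most care to be the parallel bookkeeping of the two recursions --- checking that, as long as the relevant loop-closing times stay $\le T_W$, the boundary and non-boundary constructions literally coincide, and that the stopping index of the boundary recursion is precisely $\min\{i \mid n_i > T_W\}$. Termination of loop erasure of the finite path $(X_n)_{n\le T_W}$ is classical and poses no difficulty.
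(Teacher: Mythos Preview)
Your proposal is correct and follows essentially the same approach as the paper's proof: reduce finiteness of $T_f$ to $T_f\le T_r<\infty$ via Lemma~\ref{translated}, then split into the cases $T_r\le T_W$ and $T_W<T_r$ and use the coincidence $n_i^W=n_i$, $Z^{i+1,W}=Z^{i+1}$ while $n_i\le T_W$ to identify the stabilized sequence with $(Z_n)_{n\le T_r}$ or $(Z^{i_f}_n)_{n\le T_W}$ respectively. The paper argues stabilization in the case $T_W<T_r$ by observing that $(n_i)$ is strictly increasing (hence eventually exceeds the finite $T_W$), whereas you phrase it as ``only finitely many loops in a finite path''; these amount to the same thing.
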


\begin{proof}
Recall from Lemma~\ref{translated} that~$T_r$ is finite almost surely. Since~$T_f \leq T_r$, the ending time~$T_f$ is almost surely finite. 
Assume that~$T_r < \infty$. 
Recall that the sequence~$(n_i)$ is strictly increasing. 
If~$T_W < T_r$, then~$T_W$ is finite and there exists~$i$ such that~$n_i > T_W$ and then~$n_{i}^W= \infty$ and~$Z^{m,W}=Z^{i,W}$ for~$m \geq i$.  
Let~$i_f = \min \{i \vert n_i > T_W\}$. Then~$n_{i_f-1} \leq T_W$. Then,~$(Z^{i_f,W}_n)_{n \leq T_W} = (Z^{i_f}_n)_{n \leq T_W}$. 

Then, for~$m \geq i_f$,~$n_m > T_W$ and then~$n_m^W = \infty$. Then, for every~$m \geq i_f$, 
$$(Z_n^{m,W})_{n \leq T_f} = (Z^{i_f, W}_n)_{n \leq T_f} = (Z^{i_f}_n)_{n \leq T_f}.$$

Else, there exists~$i$ such that~$T_r = n_i \leq T_W$. Then,~$n_i^W=n_i$ and~$Y_{n_i^W} \leq p(\gamma_{n_i^W})$ and for~$m \geq i$,~$(Z_n^{m,W})_{n \leq T_r} = (Z_n^{i,W})_{n \leq T_r} = (Z_n^{i})_{n \leq T_r}~$.
Since~$n_i = T_r$, then we have~$(Z_n^{i})_{n \leq T_r} =(Z_n)_{n \leq T_r}$ and therefore,~$(Z_n^{W})_{n \leq T_f} = (Z_n^{W})_{n \leq T_f}$ where~$(Z_n)$ is the~$p$-loop erased random walk without any boundary condition. 
\end{proof}

Notice that a~$p$-loop-erased random walk with boundary condition~$W$ is obtained from~$(X_n,Y_n)_{n \leq \min(T_r, T_W)}$ erasing every loop except the last one if~$T_r < T_W$.

Let us emphasize that when~$T_r > T_W$, the~$p$-loop erased random walk with boundary conditions~$(Z_n^W)_{n \leq T_W}$ is not equal to the~$p$-loop erased random walk~$(Z_n)_{n \leq T_W}$ stopped at~$T_W$.

\bigskip

We will define, in the next section, sequences of probability measures on CRSF on a growing exhaustion~$(G_n)$ of a countably infinite connected graph~$G$, with boundary conditions. We will see that under some hypotheses, those sequences of probability measures on CRSF of finite graphs~$G_n$ converge to thermodynamic limits which are probability measures on CRSF of the infinite graph~$G$. We will also define probability measures on CRSF of infinite graphs from~$p$-loop-erased random walks and compare those probability measures with limits of sequences of probability measures on finite graphs.

\section{Measures on CRSF in infinite volume and thermodynamic limits}

In the following, let~$G = (V,E)$ be a countably infinite connected graph, with finite degrees, and let~$(G_n)$ be an exhaustion of~$G$, in the sense of an increasing sequence of finite subgraphs of~$G$ whose union is~$G$. Let~$v_0$ be a vertex of~$G_1$.

\label{section4}
\subsection{Topological facts and boundary conditions}
Every subgraph of~$G$ can be seen as an element of~$\{0,1\}^{E}$. Let us recall some topological facts about the space~$\{0,1\}^{E}$. 

Since~$\{0,1\}$ is compact,~$\Omega = \{0,1\}^{E}$ is compact for the product topology and this topology is compatible with the following metric 
$$ d(w,w') = \sum_{e\in E} 2^{-\lVert e_- \rVert}1_{ \{w_e \neq w_e'\}},$$
where~$\lVert e_- \rVert$ is the length of the shortest path between~$v_0$ and an extremity of the edge~$e$. Therefore~$\Omega$ is a compact metric space.

A function~$f : \Omega \to \mathbb R~$ is continuous for the product topology if for every~$\epsilon >0$, there exists a finite subset~$\Lambda \subset E$, such that
$$ \sup_{w,w' \in \Omega  : w_{\vert \Lambda} = w'_{\vert \Lambda} } \vert f(w)-f(w') \vert \leq \epsilon.$$

A function~$f : \Omega \to \mathbb R~$ is called local if there exists a finite set~$\Lambda \subset E$ such that~$f(w)$ is entirely determined by~$w_{\vert \Lambda}$. We say that an event~$A \subset \Omega$ has finite support if the function~$1_A$ is a local function. The set of local functions is dense in the set of continuous functions~$(\mathcal C(\Omega), \vert \vert . \vert \vert_\infty)$ which is a Banach-space. 

We consider~$\mathcal C$ the smallest~$\sigma$-field which makes the cylinders~$\mathcal C_{\Lambda, \eta}= \{w \in \Omega, w_{\Lambda}=\eta \}$ measurable for every finite subset~$\Lambda \subset E$ and every finite configuration~$\eta \in \{0,1\}^\Lambda$. We say that a sequence of probability measures~$(\mu_n)$ converges to the measure~$\mu$ on~$(\Omega, \mathcal C)$ if and only if 
$$ \lim_{n \rightarrow \infty} \int_\Omega f d\mu_n = \int_\Omega f d\mu,$$
for every local function~$f$.
Since the set of local functions is dense in the set of continuous functions, this topology on the set of measures on~$(\Omega, \mathcal C)$ is the weak convergence.

In this section, we are interested in the sequences of measures~$(\mu_n)_{n \geq 1}$ on CRSF on growing subgraphs~$G_n$. If such a sequence of measures converges weakly towards an infinite volume measure, we have the following result on the limit measure.

\begin{proposition}\label{onecycle}
Assume that a sequence~$(\mu_n)_{n \geq 1}$ of measures on CRSF on growing subgraphs~$G_n$ of~$G$ converges weakly towards a measure~$\mu$, and let~$F$ be distributed according to~$\mu$. Then~$\mu$-almost surely, every finite connected component of~$F$ has exactly one cycle and its cycle has non-trivial weight.
\end{proposition}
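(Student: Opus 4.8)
\medskip

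\noindent\emph{Proof plan.}
The plan is to exploit that ``being a finite connected component'' is a local event, so that weak convergence transfers to the limit a feature present in every $\mu_n$: on a finite graph, each connected component not meeting the boundary is a unicycle whose cycle has non-trivial weight. Fix a finite connected vertex set $C \subset V$, let $E(C)$ be the finite set of edges of $G$ with both endpoints in $C$ and $\delta C$ the finite set of edges from $C$ to $V\setminus C$, and call $\eta \in \{0,1\}^{E(C)}$ \emph{bad} if it is connected, spans $C$, but is not a unicycle with a cycle $\gamma$ of weight $w(\gamma)+w(\gamma^{-1})>0$ (so $\eta$ is a tree on $C$, or has at least two independent cycles, or has one cycle of trivial weight). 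For such a pair $(C,\eta)$ set
\[
A_{C,\eta} = \{ F \in \Omega : F_{\vert E(C)} = \eta \text{ and } F_e = 0 \text{ for every } e \in \delta C \}.
\]
First I would note that whenever $F$ has a finite connected component with vertex set $C$, then $F \in A_{C,\eta}$ with $\eta = F_{\vert E(C)}$ connected and spanning $C$, and that component fails the conclusion exactly when $\eta$ is bad. Thus the bad event in the statement is contained in $\bigcup_{C}\bigcup_{\eta\text{ bad}} A_{C,\eta}$, a \emph{countable} union since $G$ has finite degrees (countably many finite connected $C$, finitely many $\eta$ per $C$); so it suffices to show $\mu(A_{C,\eta})=0$ for every bad pair.

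For this, since $E(C)\cup\delta C$ is finite, $1_{A_{C,\eta}}$ is a local function, so the hypothesis of weak convergence gives $\mu(A_{C,\eta}) = \lim_n \mu_n(A_{C,\eta})$. I would then take $n$ large enough that $C \subset V_n$, $E(C)\cup\delta C \subset E(G_n)$, and $C \cap \partial G_n = \emptyset$ (possible because $C$ is finite and $(G_n)$ exhausts $G$), and argue $\mu_n(A_{C,\eta}) = 0$: if $F$ in the support of $\mu_n$ lay in $A_{C,\eta}$, then $C$ would be a connected component of $F$ seen in $G_n$ (it is connected and spanned by $\eta \subset F$, and no $F$-edge leaves $C$); since $\mu_n$ is carried by CRSF (resp.\ ECRSF) of $G_n$ and $C$ avoids the boundary, this component is a unicycle with cycle $\gamma$, and as $\mu_n(F)$ is proportional to a product over the cycles of $F$ of $w(\cdot)+w(\cdot^{-1})$ the factor $w(\gamma)+w(\gamma^{-1})$ is positive --- so $\eta$ is not bad, a contradiction. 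Hence the support of $\mu_n$ misses $A_{C,\eta}$, $\mu_n(A_{C,\eta})=0$, and $\mu(A_{C,\eta})=0$, which finishes the proof after the countable union above.

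The work is entirely bookkeeping: the one substantive point is recognising that the single-cycle-per-component property of CRSF on $G_n$ is encoded by \emph{local} events, which is what lets it pass to the weak limit --- but only for finite components, since the corresponding events for infinite components are not local (indeed the later theorems show infinite components can fail this). The only things to be careful about are that ``$C$ is a finite connected component of $F$'' really does amount to membership in some $A_{C,\eta}$, and that the index set of the union is countable, so that a union of $\mu$-null sets is $\mu$-null.
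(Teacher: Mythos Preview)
Your proof is correct and follows essentially the same approach as the paper's: both show that the countably many local events ``a fixed finite set is a connected component carrying a bad edge configuration'' have $\mu_n$-measure zero for large $n$ (since $\mu_n$ is supported on CRSF whose non-boundary components are unicycles with positive-weight cycle), and then pass to the limit by weak convergence. The paper indexes these events by a vertex $x$ and a bad finite subgraph $T \ni x$ rather than by your pairs $(C,\eta)$, and is a little less explicit about the edge set $\delta C$ and the wired case, but the argument is otherwise identical.
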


\begin{proof}
Let~$x \in G$ and let~$T$ be a finite connected subgraph of~$G$ which contains~$x$ and which satisfies one of the following properties:
\begin{itemize}
\item
T has strictly more than one cycle;
\item
T has a cycle of trivial weight;
\item
T has no cycle. 
\end{itemize}
Let~$cc(x)$ be the connected component of~$x$ in~$F$. Notice that the event~$\{ cc(x)=T \}$ is an event with finite support since its support is included in the set of edges which have at least one extremity in~$T$. 

Let~$m$ be large enough such that~$T \subset G_{m-1}$. Then, the event~$$\{ cc(x)=T \} = \{ cc(x) \cap G_m=T \}~$$ has support in~$G_m$. For every~$n \geq m$,~$\mu_n$ is supported on CRSF whose cycles have non trivial weight on~$G_n$. Therefore,
$$ \mu_n(cc(x) \cap G_m =T) = \mu_n(cc(x)_{F_n} =T)  =  0.$$

We have the convergence~$\mu_n \rightarrow \mu$ on configurations with finite support. Then
\[
\mu_n(cc(x) \cap G_m = T) \rightarrow \mu(cc(x) \cap G_m =T).
\]
Finally, we obtain~$\mu(cc(x)=T)=0$. Since~$G$ is countable, almost surely, every finite connected component of~$F$ has exactly one cycle and its cycle has non-trivial weight.
\end{proof}

We can consider sequences of measures~$(\mu_n)_{n \geq 1}$ on CRSF on growing subgraphs~$G_n$ of~$G$ with boundary conditions, such as free and wired boundary conditions. 

\begin{definition}[Free boundary conditions]
We define the free measure on CRSF of~$G_n$ as the measure on CRSF of~$G_n$ whose configurations have weight proportional to the product of weights of cycles. This measure is denoted by~$\mu_n^F$.  
\end{definition}

\begin{definition}[Wired boundary conditions]
We define the wired measure on CRSF on~$G_n$ as the measure on CRSF of graph~$G_n$ whose configurations are either trees connected to~$\partial G_n$ or unicycles and have weight proportional to the product of weights of cycles. 
This measure is denoted by~$\mu_n^W$.  
\end{definition}

\subsection{Sampling algorithm for a fixed ordering on an infinite graph} \label{algo}

In the following, we still consider a countably infinite connected graph~$G$, an exhaustion~$(G_n)$ and a weight function~$w=p \in [0,1]$ on cycles which satisfies Assumption~\ref{assumption}. 

We now construct a probability measure on CRSF of~$G$ for weights~$p$ which is sampled by an algorithm determined by a fixed ordering of the vertex set~$V$.

Let~$\phi$ be an ordering of the vertex set~$V$ of~$G$, in the sense of a bijection~$\phi : \mathbb N \to V$. Let~$(v_i)_{i \geq 1}$ be the sequence of vertices of~$G$ with ordering~$\phi$, that is~$(v_i)_i=(\phi(i))_i$.

We will construct a measure on CRSF of~$G$ by means of a family of~$p$-loop-erased random walks with boundary conditions which are defined recursively. This family will be obtained deterministically from a family of independent simple random walks following results of Section~\ref{lerw}.

We still denote by~$\mathcal C$ the smallest~$\sigma$-field which makes the cylinders~$C_{\Lambda, \eta}$ measurable.

\begin{definition} \label{dataalgo}
Let~$((X_n^{(x)})_{n\geq 1})_{x \in G}, ((Y_n^{(x)})_{n\geq 1})_{x \in G}$ be independent random variables such that for all~$x \in G$,~$(X_n^{(x)})_n$ is a simple random walk on~$G$ starting from~$x$ and~$(Y_n^{(x)})_n$ is a sequence of independent random variables with uniform law on~$[0,1]$.
For a fixed~$x$, consider the sequence~$\left( (X_n^{(x)}, Y_n^{(x)}) \right)_{n \geq 1}$ and denote by~$T_r^{x}$ the rooting time of the~$p$-LERW that is to say the first time~$n$ such that~$(X_n^{(x)})_n$ closes a loop~$\gamma_n$ such that the inequality~$p(\gamma_n) \geq Y_n^{(x)}$ holds. 
\end{definition}

\begin{definition}{\label{construction}}
For a fixed random data~$((X_n^{(x)})_{n\geq 1})_{x \in G}, ((Y_n^{(x)})_{n\geq 1})_{x \in G}$ as above, we construct the subgraphs~$(\mathsf F_i)$ recursively. Let~$\mathsf F_0=\emptyset$. Let~$i \in \mathbb N^*$ and assume that~$\mathsf F_{i-1}$ is constructed. Denote by~$T_{f}^{\underline v_i}$ the ending time of~$((X_n^{(v_i)})_{n\geq 1}, (Y_n^{(v_i)})_{n\geq 1})$ with boundary condition~$V(\mathsf F_{i-1})$, that is
$$ T_f^{\underline v_i} = \min(T_r^{v_i}, T_{V(\mathsf F_{i-1})}).$$
Let~$\mathsf F_i = \mathsf F_{i-1} \cup L(v_i)$ where~$L(v_i)$ is the~$p$-LERW with boundary condition~$V(\mathsf F_{i-1})$ obtained from~$\left( (X_n^{(v_i)}, Y_n^{(v_i)}) \right)_{n\geq 1}$ until~$T_f^{\underline v_i}$.
\end{definition}

Each step~$i$ of the algorithm finishes either if the random walk reaches a connected component created during a previous step or if the random walk is rooted to a loop. Notice that~$T_{f}^{\underline v_i}$ is the time where the~$i^{th}$-step of the algorithm with ordering~$\phi$ finishes. Recall that under Assumption~\ref{assumption} on~$p$, Proposition~\ref{endingtime} implies that~$T_f^{\underline v_i}~$ is finite almost surely.

\begin{lemma}
There exists a measure~$\mu_\phi$ on~$(\mathcal U(G), \mathcal C)$ which is sampled by the previous algorithm with ordering~$\phi$. The measure on finite cylinders corresponds to finite random configurations which are sampled in a finite time. 
\end{lemma}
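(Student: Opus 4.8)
The plan is to construct the measure $\mu_\phi$ as the law of the limiting configuration $\mathsf F_\infty = \bigcup_i \mathsf F_i$ produced by the algorithm, and then to verify that this gives a well-defined Borel probability measure on $(\mathcal U(G), \mathcal C)$ whose evaluation on cylinders is a genuine limit of finite-time computations. First I would observe that, by Proposition~\ref{endingtime} together with Lemma~\ref{translated}, each ending time $T_f^{\underline v_i}$ is almost surely finite, so the $i$-th step terminates almost surely and $\mathsf F_i$ is an almost surely well-defined finite random subgraph of $G$. Hence $\mathsf F_\infty := \bigcup_{i \geq 1} \mathsf F_i$ is an almost surely well-defined random element of $\Omega = \{0,1\}^E$; since each $\mathsf F_i$ is a random finite subgraph, $\mathsf F_\infty$ is measurable with respect to the random data, and we set $\mu_\phi$ to be its law. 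The fact that $\mathsf F_\infty \in \mathcal U(G)$ almost surely (every vertex is eventually added, and every connected component carries a unique cycle) follows from the structure of the $p$-LERW with boundary conditions described in Definition~\ref{construction}.

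The key point to check is that $\mu_\phi$ is determined by, and computed from, the cylinder probabilities. I would argue as follows. Fix a finite $\Lambda \subset E$ and a configuration $\eta \in \{0,1\}^\Lambda$. The event $\{\mathsf F_\infty{}_{|\Lambda} = \eta\}$ is $\mathcal C$-measurable since $\mathcal C$ is generated by the cylinders, and
\[
\mu_\phi(\mathcal C_{\Lambda,\eta}) = \Prob\bigl( (\mathsf F_\infty)_{|\Lambda} = \eta \bigr).
\]
The substance of the ``sampled in finite time'' assertion is that this probability equals $\lim_{i \to \infty} \Prob\bigl( (\mathsf F_i)_{|\Lambda} = \eta \bigr)$, and in fact stabilizes: once $i$ is large enough that all endpoints of edges in $\Lambda$ have been reached (i.e. lie in $V(\mathsf F_i)$), no later step of the algorithm can alter $\mathsf F_j{}_{|\Lambda}$ for $j \geq i$, because new $p$-LERW paths stop upon hitting $V(\mathsf F_{j-1}) \supseteq V(\mathsf F_i)$ and therefore never add an edge both of whose endpoints were already present — and edges with at most one endpoint in $V(\mathsf F_i)$ whose other endpoint is not yet present are not in $\Lambda$ once $i$ is large enough. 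Care is needed here: I would first show that the (random) index $I_\Lambda := \min\{ i : \Lambda\text{-endpoints} \subset V(\mathsf F_i)\}$ is almost surely finite, which follows from the fact that every vertex $v$ is eventually added (the step starting from $v$, if $v$ is not already covered, terminates a.s.), and then that $(\mathsf F_j)_{|\Lambda}$ is constant for $j \geq I_\Lambda$.

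Having this, I would define the set function on cylinders by $\nu(\mathcal C_{\Lambda,\eta}) := \Prob((\mathsf F_\infty)_{|\Lambda} = \eta)$ and note it is automatically consistent (it is the pushforward of a single probability measure), so by the Kolmogorov/Carathéodory extension theorem — or, more simply, because $\mu_\phi$ already exists as the law of $\mathsf F_\infty$ — it extends uniquely to a probability measure on $(\Omega,\mathcal C)$ supported on $\mathcal U(G)$. The last sentence of the statement is then just the observation that $\mu_\phi(\mathcal C_{\Lambda,\eta}) = \lim_i \Prob((\mathsf F_i)_{|\Lambda} = \eta)$, and that for each fixed $\Lambda$ the right-hand side is attained at the a.s.\ finite time $I_\Lambda$, i.e. the cylinder probability is computed by running the algorithm for finitely many steps. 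The main obstacle I anticipate is the bookkeeping in the stabilization argument — precisely formulating why no future step can modify edges in $\Lambda$ once both endpoints are covered, given that a $p$-LERW may backtrack before its loop-erasure. This requires invoking the wired boundary-condition mechanism of Definition~\ref{construction} (the walk defining $L(v_i)$ is killed on hitting $V(\mathsf F_{i-1})$, so the edges it contributes all lie ``outside'' the already-explored vertex set except at the single gluing vertex), which forces monotonicity of $V(\mathsf F_i)$ and the claimed irreversibility of $\mathsf F_i{}_{|\Lambda}$.
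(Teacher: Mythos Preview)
Your proposal is correct and follows essentially the same construction as the paper: define $\mathsf F=\bigcup_i \mathsf F_i$ as a deterministic function of the random data, take $\mu_\phi$ to be its law, and argue that cylinder probabilities are determined after finitely many steps of the algorithm. The one point where the paper is cleaner is the choice of stopping index. You introduce the \emph{random} index $I_\Lambda=\min\{i:\text{endpoints of }\Lambda\subset V(\mathsf F_i)\}$ and then argue separately that it is a.s.\ finite; the paper instead takes the \emph{deterministic} set $K$ consisting of all endpoints of $\Lambda$ together with all vertices preceding them in the ordering $\phi$, and observes that after exactly $|K|$ steps every vertex of $K$ has been visited (since $v_j\in V(\mathsf F_j)$ for each $j$), so $\mathsf F_{|K|}$ already spans $\Lambda$. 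This bypasses both the a.s.\ finiteness argument for $I_\Lambda$ and the bookkeeping you anticipate about later steps, and it is exactly the device reused in the proof of Theorem~\ref{algothermo}. Your stabilization claim (no later step can add an edge with both endpoints already covered) is correct and is implicitly what the paper uses as well.
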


\begin{proof}
Sample a sequence~$((X_n^{(x)})_{n\geq 1})_{x \in G}, ((Y_n^{(x)})_{n\geq 1})_{x \in G}$ such as in Definition~\ref{dataalgo}. 

From Definition~\ref{construction}, we obtain a CRSF of~$G$ by setting~$\mathsf F = \cup_{i \geq 1} \mathsf F_i$. The configuration~$\mathsf F$ is well defined since it is a deterministic function of 
$$((X_n^{(x)})_{n\geq 1})_{x \in G}, ((Y_n^{(x)})_{n\geq 1})_{x \in G}.$$ 

Let~$\mu_\phi$ be the law of~$\mathsf F$ associated with a random choice of~$((X_n^{(x)})_{n\geq 1})_{x \in G}, ((Y_n^{(x)})_{n\geq 1})_{x \in G}$, that is to say the push-forward by the algorithm of the measure which gives 
$$((X_n^{(x)})_{n\geq 1})_{x \in G}, ((Y_n^{(x)})_{n\geq 1})_{x \in G}.$$  

Then, $\mu_\phi$ is a probability measure on~$(\mathcal U(G), \mathcal C)$. Let~$B$ be a finite subset of size~$n$ of~$E$, with edges~$e_1,\ldots ,e_n$ and let~$\varepsilon_1,\ldots ,\varepsilon_n \in \{0,1\}^n$. Let~$K$ be the finite set of vertices containing all the extremities of edges of~$B$ and vertices which are preceding those vertices for the order~$\phi$. Let us consider the previous algorithm for vertices~$v_1,\ldots ,v_{\vert K \vert }$ for the ordering~$\phi$. Almost surely, the algorithm to construct~$\mathsf F_{\vert K \vert}$ finishes in a finite time. The constructed graph~$\mathsf F_{\vert K \vert}$ is a random subgraph of~$G$ which is spanning for~$K$ and therefore it is spanning for~$B$. 
Then~$\mu_\phi(C_{\varepsilon_1,\ldots ,\varepsilon_n})$ is the probability that the random configuration~$F_{\vert B}$ obtained from the previous construction satisfies 
$$\mathsf F_{\vert B} \in C_{\varepsilon_1,\ldots ,\varepsilon_n}.$$ 
The measure~$\mu_\phi$ restricted to~$(2^B, \mathcal C)$ is the law of the random configuration~$\mathsf F_{\vert B}$, which is sampled in a finite time. 
\end{proof}

In the following, we will show that the measure in infinite volume constructed from an enumeration of the vertex set~$V$ does not depend on the choice of the enumeration. The proof of this statement will rely on a comparison between the measure~$\mu_\phi$ for an ordering~$\phi$ and a measure on CRSF on a large finite subgraph of~$G$. We will see that, under some hypotheses, the thermodynamic limit coincides with the measure sampled by the previous algorithm and does not depend on the ordering of the infinite vertex set.

\subsection{Thermodynamic limits of the Wilson measures}

Assume that Assumption~\ref{assumption} on the existence of a lower bound~$\alpha >0$ on the weight of a family of loops still holds.  

Let~$n \in \mathbb N$ and~$\phi_n$ be an ordering of the vertex set~$V_n$ of the graph~$G_n$. The measures~$\mu_n^F$ and~$\mu_n^W$ are sampled by the algorithm described in the first section. 

Let us consider the sequence of measures~$(\mu_n)$ on CRSF of~$(G_n)_{n\geq 1}$ which are defined by the previous algorithm but if the walk meets~$\partial G_n$, the walk is stopped. According to~\cite{10.1214/15-AOP1078}, the measure does not depend on the ordering of the vertices of~$G_n \backslash \partial G_n$.

\begin{theoreme}{\label{algothermo}}
Let~$\phi$ be an ordering of~$G$ in the sense of a bijection~$\phi : \mathbb N \to G$. Let~$(G_n)$ be an increasing exhaustion of~$G$, and let~$(\mu_n^{F}), (\mu_n^{W})$ be the corresponding sequences of probability measures on CRSF of~$G_n$ with free and wired boundary conditions, respectively. The sequences of probability measures~$(\mu_n^{F})$ and~$(\mu_n^{W})$ converges weakly to the measure~$\mu_\phi$.
\end{theoreme}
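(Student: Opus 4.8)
The plan is to show that for every local function $f$, we have $\int f \, d\mu_n^F \to \int f \, d\mu_\phi$ and $\int f \, d\mu_n^W \to \int f \, d\mu_\phi$, where $f$ depends only on the restriction of a configuration to a finite edge-set $B$. Fix such a $B$, and let $K$ be the finite set of vertices consisting of all endpoints of edges in $B$ together with all vertices preceding them in the ordering $\phi$. The key observation is that in the infinite-volume algorithm defining $\mu_\phi$, the configuration restricted to $B$ is determined by running the $p$-LERW construction for the first $|K|$ vertices $v_1,\dots,v_{|K|}$ only: once all of $K$ is spanned, the edges incident to $B$ are frozen. By Proposition~\ref{endingtime}, each of these $|K|$ steps terminates almost surely in finite time, so the configuration $\mathsf F_{|K|}$ is an almost surely finite random subgraph.

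The heart of the argument is a coupling: I would run the \emph{same} family of simple random walks $(X^{(v)}_n)$ and uniform variables $(Y^{(v)}_n)$ to drive both the infinite-volume algorithm and the finite-volume algorithms on $G_m$ (with free or wired boundary), using an ordering $\phi_m$ of $V_m$ that agrees with $\phi$ on the first $|K|$ vertices. On the event that, during the first $|K|$ steps, none of the driving walks $(X^{(v_i)}_n)$ ever reaches $\partial G_m$ before being either rooted to a kept loop or absorbed into the previously built forest, the finite-volume and infinite-volume constructions produce \emph{identical} configurations on $B$: in that case the boundary $\partial G_m$ (whether treated as a killing set for the wired measure or simply never reached for the free measure) plays no role, and the recursive $p$-LERW constructions coincide step by step. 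Here Theorem~\ref{order} (independence of the finite-volume measure from the ordering of $V_m \setminus \partial G_m$) is what lets me choose $\phi_m$ compatible with $\phi$ without changing $\mu_m^F$ or $\mu_m^W$.

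It then remains to estimate the probability of the bad event. Writing $T_r^{v_i}$ for the rooting time and $T_{\partial G_m}^{v_i}$ for the hitting time of $\partial G_m$ by the $i$-th driving walk, the bad event is contained in $\bigcup_{i=1}^{|K|} \{ T^{v_i}_r > T^{v_i}_{\partial G_m} \text{ and the walk has not yet been absorbed} \}$, which is dominated by $\sum_{i=1}^{|K|} \Prob_{v_i}(T^{v_i}_r > T^{v_i}_{\partial G_m})$. By Lemma~\ref{translated}, for each fixed starting vertex $v_i$ this probability decays at least like $\delta^{\,m - m_{v_i}}$ with $m$, so for $m$ large the whole sum is at most $\varepsilon$. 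Consequently $|\int f\, d\mu_m^F - \int f\, d\mu_\phi| \le 2\|f\|_\infty \varepsilon$ and similarly for $\mu_m^W$, which gives the weak convergence. The main obstacle — and the step requiring the most care — is making the step-by-step identification of the two recursive constructions fully rigorous on the good event: one must check that absorption into the existing component $V(\mathsf F_{i-1})$, loop-erasure, and the loop-keeping decisions are all triggered by exactly the same portions of the driving data in both the finite and infinite settings, so that the coupling really yields equality of configurations on $B$ and not merely equality in distribution.
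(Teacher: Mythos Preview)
Your approach is essentially identical to the paper's: the same coupling of the infinite-volume algorithm with the finite-volume ones via shared random data $((X_n^{(x)}),(Y_n^{(x)}))$, the same set $K$ (endpoints of $B$ plus their $\phi$-predecessors), the same good event (no walk from $K$ reaches $\partial G_m$ before ending), and the same union bound controlled by Lemma~\ref{translated}.

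There is, however, one genuine issue in your write-up: you invoke Theorem~\ref{order} to justify that the finite-volume measure $\mu_m^{F,W}$ does not depend on the ordering of $V_m$, which is what lets you pick $\phi_m$ compatible with $\phi$. But Theorem~\ref{order} is a statement about the \emph{infinite}-volume measure $\mu_\phi$, and its proof relies on the very theorem you are trying to prove here---so the citation is circular. The result you actually need is the finite-volume order-independence of the Wilson-type algorithm, which is the content of Subsection~\ref{algofini} and is established in~\cite{10.1214/15-AOP1078}; the paper's own proof cites precisely this. Replace that reference and your argument goes through.
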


 \begin{proof}
We consider an event~$B \in 2^E$ which depends only on finitely many edges, and we consider~$K_0$ the set of vertices incident to the edges on which~$B$ depends. Let~$K$ be the union of~$K_0$ and the set of vertices that precede some vertex in~$K_0$ in the ordering~$\phi$ of the vertices. Let~$n$ be large enough such that~$K \subset G_n$.

Let us construct a coupling~$(\mathsf F, \tilde{\mathsf F}_n^F,  \tilde{\mathsf F}_n^W)$ of random configurations, obtained from the same random data~$((X_n^{(x)})_{n\geq 1})_{x \in G}, ((Y_n^{(x)})_{n\geq 1})_{x \in G}$, such that the law of~$\mathsf F$ is~$\mu_\phi$, the law of~$\tilde{\mathsf F}_n^F$ is~$\mu_n^F$ and the law of~$\tilde{\mathsf F}_n^W$ is~$\mu_n^W$ and such that the three configurations coincide with high probability on~$B$. 

We denote~$\tilde \phi_n$ the ordering induced by the ordering~$\phi$ on~$G_n$. 

We follow the algorithm for every vertex of~$K$ following the ordering~$\phi_{\vert K}$. If at one step of the algorithm, the random walk~$(X_n^{(x)})$ starting from a vertex~$x \in G_n$ reaches~$\partial G_n$, the configuration~$\mathsf F$ is obtained following the random walk in the infinite graph~$G$ until the end of the step and~$\tilde{\mathsf F}_n^F$,~$\tilde{\mathsf F}_n^W$ are obtained following the random walk with boundary conditions. More precisely,~$\tilde{\mathsf F}_n^W$ is obtained from~$p$-loop erased random walks with boundary conditions~$\partial G_n$ and~$\tilde{\mathsf F}_n^F$ is obtained following the random walk on the graph~$G_n$, until the end of the step, that is the ending time of the process~$(X_n^{(x)}, Y_n^{(x)})_n$.

Once every vertex of~$K$ has been explored, we complete the configuration~$\mathsf F$ following the ordering~$\phi$ and we complete the configurations~$\tilde{\mathsf F}_n^F$,~$\tilde{\mathsf F}_n^W$ on~$G_n$ following the ordering~$\tilde \phi_n$, with boundary conditions. 

Let us denote by~$E(K)$ the set of edges whose vertices are in~$K$. The configurations~$\mathsf F$,~$\tilde{\mathsf F}_n^F$ and~$\tilde{\mathsf F}_n^W$ obtained from the algorithm are respectively subgraphs of~$G$ and~$G_n$. The three configurations are spanning subgraphs of~$G_n$ and in particular spanning subgraphs of~$(K,E(K))$.  

$\mathsf F_{E(K)}$ is the restriction to~$(K,E(K))$ of a random configuration following the law~$\mu_\phi$. Since~$G_n$ is finite,~$(\tilde{\mathsf F}_n)^{W,F}_{E(K)}$ is the restriction to~$(K,E(K))$ of a random configuration following the law~$\mu_n^{W,F}$ and this law does not depend on~$\tilde \phi_n$ (see~\cite{10.1214/15-AOP1078}).  

Since~$B$ depends only on edges whose endpoints are in~$K_0$, we know that
$$\vert \mu_\phi(B) - \mu_n^{F,W}(B)  \vert \leq \Prob(\mathsf F_{E(K)} \neq (\tilde{\mathsf F}_n)_{E(K)})$$

Following the previous algorithm, while each step starting from a vertex of~$K$ finishes before the random walk reaches~$\partial G_n$, both configurations~$\mathsf F_{E(K)}, (\tilde{\mathsf F}_n)_{E(K)}$ which are obtained are equal. Therefore, from the union bound,

\[
\begin{split}
 \Prob(\mathsf F_{E(K)} \neq (\tilde{\mathsf F}_n)_{E(K)}) & \leq \Prob \left( \bigcup_{i \in [\vert K\vert ]} \{ T_n^{v_i} \leq T_f^{\underline v_i} \} \right)  \leq \sum_{i \in [\vert K\vert ]} \Prob(T_n^{v_i} \leq T_f^{\underline v_i} )    \\
 & \leq \sum_{i \in [\vert K\vert ]} \Prob(T_n^{v_i} \leq T_r^{v_i} ) \leq \vert K \vert \max_{i \in [\vert K\vert ]} \Prob(T_n^{v_i} \leq T_r^{v_i} ) 
 \end{split}
\]

From Lemma~\ref{translated}, we obtain when~$n \to \infty$,
$$ \vert \mu_\phi(B) - \mu_n^{W,F}(B)  \vert \leq  \vert K \vert \max_{i \in [\vert K\vert ]} \Prob(T_n^{v_i} \leq T_r^{v_i} )  \rightarrow 0$$
which implies the weak convergence of~$(\mu_n^{W,F})$ towards~$\mu_\phi$.
\end{proof}

Recall that for every~$n$, the measure~$\mu_n^W$ is sampled by an algorithm and does not depend on the ordering of vertices of~$G_n$ chosen in the algorithm. Combined with Theorem~\ref{algothermo}, this independence implies the following result.

\begin{theoreme}{\label{order}}
Let~$\phi$ be an ordering of the vertices, that is a bijection~$\phi:\mathbb N \to V$. Let~$p$ be a weight function satisfying Assumption~\ref{assumption}. Let~$\mu_\phi$ be the measure on the cycle-rooted spanning forests of~$G$ associated with the algorithm of loop-erased random walk with weights~$p(\gamma)$. The measure~$\mu_\phi$ does not depend on~$\phi$. 
\end{theoreme}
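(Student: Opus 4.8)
The plan is to deduce this directly from Theorem~\ref{algothermo} together with the fact, recalled just above the statement, that the finite-volume measures do not depend on the ordering. First I would fix two orderings $\phi,\psi:\mathbb N\to V$ and observe that for each $n$ the wired measure $\mu_n^W$ on CRSF of $G_n$ is an intrinsically defined probability measure: its configurations have weight proportional to the product of cycle-weights (Definition~\ref{defwired}), and by \cite{10.1214/15-AOP1078} it is sampled by the finite Wilson-type algorithm whatever ordering of $V_n$ is used. Hence $(\mu_n^W)_{n\ge 1}$ is a single sequence of measures, not depending on $\phi$ or on $\psi$.

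Next I would invoke Theorem~\ref{algothermo} twice. Applied with the ordering $\phi$ it gives $\mu_n^W \to \mu_\phi$ weakly, and applied with the ordering $\psi$ it gives $\mu_n^W \to \mu_\psi$ weakly, both convergences being in the sense that $\int_\Omega f\,d\mu_n^W$ converges to $\int_\Omega f\,d\mu_\phi$ (resp.\ to $\int_\Omega f\,d\mu_\psi$) for every local function $f$. Since both statements concern the same sequence $(\mu_n^W)$, we conclude that $\int_\Omega f\,d\mu_\phi = \int_\Omega f\,d\mu_\psi$ for every local function $f$.

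It then remains to upgrade this equality on local functions to equality of measures. The indicator of every cylinder $C_{\Lambda,\eta}$ is a local function, so $\mu_\phi$ and $\mu_\psi$ agree on all cylinders; the cylinders form a $\pi$-system generating $\mathcal C$, so Dynkin's $\pi$--$\lambda$ theorem yields $\mu_\phi = \mu_\psi$ on $(\Omega,\mathcal C)$. As $\phi$ and $\psi$ were arbitrary, $\mu_\phi$ does not depend on the ordering. (One could run the same argument with $\mu_n^F$ in place of $\mu_n^W$.)

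The substantive input is entirely Theorem~\ref{algothermo}; the only point that needs to be stated with care — and which is the real crux of the argument — is that $(\mu_n^W)$ is genuinely independent of the ordering, whereas a priori the infinite-volume algorithm producing $\mu_\phi$ is not. I would therefore emphasize this dependence, citing \cite{10.1214/15-AOP1078}, so that the two applications of Theorem~\ref{algothermo} are seen to produce weak limits of one and the same sequence.
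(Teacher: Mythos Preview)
Your proof is correct and follows essentially the same approach as the paper: both arguments rest on the fact that the finite-volume measures $\mu_n^{W}$ (or $\mu_n^{F}$) are ordering-independent and converge to $\mu_\phi$ for any ordering $\phi$, forcing the limit to be unique. The only cosmetic difference is that the paper re-uses the explicit estimate $|\mu_\phi(B)-\mu_n^{W,F}(B)|\le |K|\max_i \Prob(T_n^{v_i}\le T_r^{v_i})$ from the proof of Theorem~\ref{algothermo} and applies a triangle inequality, whereas you invoke uniqueness of weak limits directly; the content is the same.
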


\begin{proof}
Let~$\phi$,~$\tau$ be two orderings, with~$(v_i)=(\phi(i))$,~$(w_i)=(\tau(i))$ let~$K_1$ and~$K_2$ be respectively the sets of vertices that precede some vertex in~$K_0$ in the ordering~$\phi$ (resp.~$\tau$) and let~$n$ large enough such that~$K_1 \cup K_2 \subset G_n$. Then, from Theorem~\ref{algothermo},

$$ \vert \mu_\phi(B) - \mu_\tau(B)  \vert \leq  \vert K_1 \vert \max_{i \in [\vert K_1 \vert ]} \Prob_{v_i}(T_n \leq T_r) + \vert K_2 \vert \max_{i \in [\vert K_2 \vert ]} \Prob_{w_i}(T_n \leq T_r) \rightarrow 0$$

This shows that both distributions in infinite volume coincide on cylinders and therefore the measure in infinite volume does not depend on the ordering of the vertices.
\end{proof}

For a weight function~$p$ satisfying Assumption~\ref{assumption}, we will denote by~$\mu_p$ the corresponding probability measure on CRSFs of G, which does not depend on the ordering of the vertices. \\

\section{Study of the configurations sampled under the Wilson measure}

\label{section5}

In this section, we will study the asymptotic behavior of configurations and the rate of decay of correlations with the distance for the measure~$\mu_p$ which is sampled by the previous algorithm of~$p$-loop erased random walks in infinite volume and which will be called the Wilson measure, under the following assumption. 

\begin{assumption}\label{assumption2}
There exists~$\alpha >0, \beta >0, M, M'>0, C>0, d \in \mathbb N$, a family of loops~$\Gamma \subset \mathcal C(G)$ and for every~$x \in G$, an increasing sequence~$(B_n^x)$ of subgraphs of~$G$, exhausting~$G$ and containing~$x$ such that : 
\begin{itemize}
\item
For every~$\gamma \in \Gamma$,~$\alpha \leq w(\gamma) \leq 1$.
\item
For every~$v \in \partial B_n^x$, there exists a loop~$\gamma_v \in \Gamma \cap (B_{n+1}^x \backslash (B_n \cup \partial B_{n+1}^x))$ such that the probability for a random walk starting from~$v$ of making this loop~$\gamma_v$ is greater than~$\beta$. 
\item
For every~$x$, for every~$n \in \mathbb N$,~$M'n \leq d(x, \partial B_n^x) \leq Mn$, and~$\vert \partial B_n^x \vert \leq Cn^d$.
\end{itemize}
\end{assumption}

Assumption~\ref{assumption2} implies Assumption~\ref{assumption} and is satisfied in particular if the graph and the weight function~$w$ on cycles are invariant under translations and if Assumption~\ref{assumption} is satisfied for an exhaustion~$(G_n)_n$ such that~$d(0,\partial G_n)\sim M n$ when~$n$ tends to infinity, where~$M>0$.

\subsection{Every connected component is finite for the Wilson measure} \label{finiteccwilsonmeasure}

\begin{definition}
For a vertex~$x$ and a subset~$A \subset G$, we denote by~$\{x \leftrightarrow A\}$ the event~$\{A \cap C_x \neq \emptyset \}$ where~$C_x$ is the connected component of~$x$ in the random configuration sampled under~$\mu$. In particular,~$\{x \leftrightarrow y\}$ means that~$x$ and~$y$ are in the same connected component.
\end{definition}

 We will denote by~$T_{m,x}^x$ the hitting-time of~$\partial B_m^x$ for the random walk~$(X_n^{(x)})$. Recall that~$T_m^x$ is the hitting-time of~$\partial G_m$ for the random walk~$(X_n^{(x)})$.

\begin{lemma}\label{upperbound}
Under Assumption~\ref{assumption2}, there exists~$\delta>0$ such that the following inequality holds for every~$m$, for every~$x \in G$
$$\Prob_x(\{T_r^x \geq T_{m,x}^x\})  \leq \delta^{m}.$$
\end{lemma}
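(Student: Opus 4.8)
The strategy is to run the argument of Lemma \ref{rooting}, but with the exhaustion $(B_n^x)$ centered at $x$ in place of the fixed exhaustion $(G_n)$. Fix $x \in G$. Let $S_m := T_{m,x}^x$ denote the hitting time of $\partial B_m^x$ by the random walk $(X_n^{(x)})$, and note that each $S_m$ is a stopping time for the filtration $(\mathcal{F}_n)$ adapted to $((X_n^{(x)}, Y_n^{(x)}))_n$, exactly as in Lemma \ref{stoppingtime}; moreover $\{S_m < T_r^x\} \in \mathcal{F}_{S_m}$ by the same computation. Each $S_m$ is finite almost surely by the argument of Lemma \ref{hitting} (the walk exits every finite ball in finite time).

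The key step is the one-step bound: under Assumption \ref{assumption2}, conditional on $\{S_m < T_r^x\}$,
\[
\Prob_x(S_{m+1} < T_r^x \mid S_m < T_r^x) \leq \delta := 1 - \alpha\beta < 1.
\]
I would establish this exactly as in the proof of Lemma \ref{rooting}. On $\{S_m < T_r^x\}$, the walk is at a vertex $v = X_{S_m}^{(x)} \in \partial B_m^x$. By the second bullet of Assumption \ref{assumption2}, there is a loop $\gamma_v \in \Gamma \cap (B_{m+1}^x \setminus (B_m^x \cup \partial B_{m+1}^x))$ with $w(\gamma_v) \geq \alpha$ and $\Prob_v\big((X_1, \ldots, X_{|\gamma_v|}) = \gamma_v\big) > \beta$. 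By the strong Markov property, conditional on $X_{S_m}^{(x)}$ the increments $(X_{S_m+k}^{(x)}, Y_{S_m+k}^{(x)})_{k \geq 1}$ are independent of $\mathcal{F}_{S_m}$, hence independent of the event $\{S_m < T_r^x\}$. On the event $A_v$ that the walk traces $\gamma_v$ right after $S_m$ and the event $B_v = \{Y_{S_m + |\gamma_v|}^{(x)} \leq p(\gamma_v)\}$ — which together have conditional probability at least $\beta \alpha$ — a loop is kept before the walk can reach $\partial B_{m+1}^x$, because $\gamma_v$ lies strictly inside $B_{m+1}^x \setminus (B_m^x \cup \partial B_{m+1}^x)$ and is therefore disjoint from the part of the loop-erased trajectory constructed up to time $S_m$ (which stays in $B_m^x$). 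Hence on $\{S_m < T_r^x\} \cap A_v \cap B_v$ we have $T_r^x = S_m + |\gamma_v| < S_{m+1}$, giving the displayed one-step inequality.

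Iterating the one-step bound and using that $S_0 = 0 \leq T_r^x$ trivially (or starting the induction at $m=0$ where the inequality $\Prob_x(T_r^x \geq S_0) \leq 1 = \delta^0$ is automatic), we obtain by induction on $m$
\[
\Prob_x(T_r^x \geq S_m) = \Prob_x(S_m \leq T_r^x) \leq \delta^m
\]
uniformly in $x$, since $\alpha$ and $\beta$ in Assumption \ref{assumption2} do not depend on $x$. This is the claimed bound. The main subtlety — just as in Lemma \ref{rooting} — is verifying that keeping the loop $\gamma_v$ genuinely forces $T_r^x < S_{m+1}$ rather than merely $T_r^x \le S_{m+1}$: this requires the strict containment $\gamma_v \subset B_{m+1}^x \setminus (B_m^x \cup \partial B_{m+1}^x)$ guaranteed by Assumption \ref{assumption2}, together with the observation that the loop-erased walk up to $S_m$ has vertices only in $B_m^x$, so the new loop is not erased by a later excursion and closes before $\partial B_{m+1}^x$ is reached. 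The uniformity in $x$ is immediate once one notices that no constant in the argument depends on $x$.
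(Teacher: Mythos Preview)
Your proposal is correct and follows exactly the paper's approach: apply the argument of Lemma~\ref{rooting} to the exhaustion $(B_n^x)$ in place of $(G_n)$, and observe that the resulting constant $\delta = 1-\alpha\beta$ is independent of $x$ because $\alpha,\beta$ in Assumption~\ref{assumption2} are. The only difference is cosmetic: the paper simply cites Lemma~\ref{rooting} and notes that Assumption~\ref{assumption2} makes Assumption~\ref{assumption} hold for each $(B_n^x)$ with uniform constants, whereas you spell out the one-step Markov argument in full.
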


\begin{proof}
Let~$x \in G$. Under Assumption~$\ref{assumption2}$, Assumption~\ref{assumption} is satisfied for the vertex~$x$, and therefore, if we denote by~$T_{m,x}^x$ the hitting time of~$\partial B_m^x$ for a~$p$-loop erased random walk starting from~$x$, and~$T_r^x$ its rooting time, Lemma~\ref{rooting} gives the existence of a~$0< \delta < 1$ such that the following inequality holds for every~$m$, 
$$\Prob_x(\{T_r^x \geq T_{m,x}^x\})  \leq \delta^{m},$$
where~$\delta = 1-\alpha \beta$ for parameters~$\alpha, \beta$ of~\ref{assumption2}. In particular,~$\delta$ does not depend on~$x$, which concludes the proof.
\end{proof}

\begin{lemma}\label{connectdistance}
Let~$\delta>0$ as in Lemma~\ref{upperbound} and~$M>0$ as in Assumption~\ref{assumption2}. Let~$x,y$ be two vertices of~$G$ and denote by~$d(x,y)$ the distance between~$x$ and~$y$, that is to say the length of the shortest path from~$x$ to~$y$. Then, if~$n \leq \frac{d(x,y)}{2M}$, 
$$ \mu_p(x \leftrightarrow y) \leq  2\delta^{n}.$$
\end{lemma}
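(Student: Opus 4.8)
The plan is to evaluate $\mu_p(x\leftrightarrow y)$ by running Wilson's algorithm in an ordering $\phi$ that starts at $x$ and then $y$; this is allowed since, by Theorem~\ref{order}, $\mu_p=\mu_\phi$ is independent of $\phi$. With $\phi(1)=x$, $\phi(2)=y$, the first step produces $L(x)$, the $p$-LERW from $x$ with empty boundary condition, which necessarily terminates by keeping a loop at its rooting time $T_r^x$, so $\mathsf F_1=L(x)$; the second step produces the $p$-LERW from $y$ with boundary condition $V(L(x))$, ending at time $\min(T_r^y,T_{V(L(x))})$, so that its trajectory is contained in $(X_k^{(y)})_{k\le T_r^y}$. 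The structural fact I would use is that in Wilson's algorithm each later step attaches its branch to exactly one already-constructed component, or creates a new one, so that distinct components never merge; hence $x\leftrightarrow y$ under $\mu_p$ if and only if this second-step $p$-LERW terminates by hitting $V(L(x))$ (which includes the degenerate case $y\in V(L(x))$).

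Then I would localize both branches using the exhaustions $(B_m^x)_m$ and $(B_m^y)_m$ of Assumption~\ref{assumption2}. Since $B_n^x$ and $B_n^y$ are contained in the balls of radius $Mn$ about $x$ and $y$ respectively (a consequence of $d(x,\partial B_n^x)\le Mn$ in Assumption~\ref{assumption2}) and $n\le d(x,y)/(2M)$, the sets $B_n^x$ and $B_n^y$ are disjoint. By Lemma~\ref{upperbound} applied at $x$, $\Prob_x(T_r^x\ge T_{n,x}^x)\le\delta^n$; and on the event $\{T_r^x<T_{n,x}^x\}$ the walk $(X_k^{(x)})$ has not met $\partial B_n^x$ by time $T_r^x$, so the whole trajectory $X_0^{(x)},\dots,X_{T_r^x}^{(x)}$, hence $V(L(x))$, lies in $B_n^x\setminus\partial B_n^x$. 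Applying Lemma~\ref{upperbound} at $y$ in the same way, on the event $\{T_r^y<T_{n,y}^y\}$ the trajectory $(X_k^{(y)})_{k\le T_r^y}$, hence the second-step branch, lies in $B_n^y$.

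On the intersection of $\{T_r^x<T_{n,x}^x\}$ and $\{T_r^y<T_{n,y}^y\}$, the second-step branch lies in $B_n^y$ while $V(L(x))\subseteq B_n^x$, and since $B_n^x\cap B_n^y=\emptyset$ this branch cannot meet $V(L(x))$; by the structural fact, $x\not\leftrightarrow y$ on this event. A union bound gives $\mu_p(x\leftrightarrow y)\le\Prob_x(T_r^x\ge T_{n,x}^x)+\Prob_y(T_r^y\ge T_{n,y}^y)\le 2\delta^n$, as claimed. I expect the main obstacle to be the reduction of $\{x\leftrightarrow y\}$ to the local events $\{T_r^x\ge T_{n,x}^x\}\cup\{T_r^y\ge T_{n,y}^y\}$: this relies on the non-merging property of the components built by Wilson's algorithm, on the disjointness $B_n^x\cap B_n^y=\emptyset$ (which is where the geometric control in Assumption~\ref{assumption2} enters), and on a careful check that the second step's branch really is confined to $B_n^y$ on the stated event; once these are in place, the probability bound is immediate from Lemma~\ref{upperbound} and the union bound.
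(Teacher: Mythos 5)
Your proposal is correct and follows essentially the same route as the paper: reorder so that $x,y$ come first (justified by Theorem~\ref{order}), use the geometric bounds of Assumption~\ref{assumption2} to make $B_n^x$ and $B_n^y$ disjoint when $n\le d(x,y)/(2M)$, observe that on $\{T_r^x<T_{n,x}^x\}\cap\{T_r^y<T_{n,y}^y\}$ the two branches are confined to these disjoint sets and hence form separate components, and conclude by Lemma~\ref{upperbound} and a union bound. Your write-up is in fact somewhat more explicit than the paper's about the non-merging property of the algorithm and about why the second branch is confined to $B_n^y$, but the argument is the same.
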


\begin{proof}
According to Theorem~\ref{order}, the measure~$\mu_p$ does not depend on the ordering of the vertices. We may choose an ordering~$\phi$ in which~$x$ and~$y$ are the first two vertices. 
Since~$n \leq \frac{d(x,y)}{2M}$, we have~$d(x,\partial B_n^x) \leq Mn \leq \frac{d(x,y)}{2}$ and~$d(y,\partial B_n^y) \leq Mn \leq \frac{d(x,y)}{2}$.

If~$x$ and~$y$ are in the same connected component in a configuration obtained from this algorithm, we know that either for the~$p$-loop erased random walk starting from~$x$ or for the one starting from~$y$, we have~$\{T_r^x \geq T_{f \phi}^x \geq T_{n,x}^x\}$ or~$\{T_r^y \geq T_{f \phi}^y \geq T_{n,x}^y\}$. Indeed, if~$T_{f \phi}^x \leq T_{n,x}^x$ and~$T_{f \phi}^y \leq T_{n,y}^x$, then the~$p$-loop erased random walk starting from~$x$ and from~$y$ cannot intersect, and form two disjoint connected component in the configuration.  
Therefore, from the union bound,
$$ \mu_p(x \leftrightarrow y) \leq \Prob_x(\{T_r^x \geq T_{n,x}^x\}) + \Prob_y(\{T_r^y \geq T_{n,y}^y\}) \leq 2\delta^{n} ~$$
which concludes the proof.
\end{proof}

\begin{theoreme}{\label{allccfinite}}
$\mu_p$-almost surely, for every vertex~$x \in V$ of~$G$, the connected component of~$x$ is finite.
\end{theoreme}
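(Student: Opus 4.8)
The plan is to deduce the theorem from Lemma~\ref{connectdistance} by a Borel--Cantelli argument. Fix a vertex $x \in V$. The connected component $C_x$ is infinite if and only if for every $R>0$ there exists a vertex $y$ with $d(x,y) \geq R$ and $x \leftrightarrow y$. Since $G$ has finite degrees, the ball $B(x,R)$ of radius $R$ around $x$ is finite, so the event $\{C_x \text{ is infinite}\}$ is contained in $\bigcap_{R} \{x \leftrightarrow y \text{ for some } y \text{ with } d(x,y)=R\}$, i.e. in $\limsup_{R} A_R$ where $A_R = \{\exists y,\ d(x,y) = R,\ x \leftrightarrow y\}$.

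First I would bound $\mu_p(A_R)$ by a union bound over the vertices on the sphere of radius $R$: writing $S_R = \{y : d(x,y) = R\}$, we have $\mu_p(A_R) \leq \sum_{y \in S_R} \mu_p(x \leftrightarrow y)$. Because $G$ has finite degrees, $|S_R| \leq D^R$ for some constant $D$ depending on $x$ (or more crudely $|S_R| \le |B(x,R)|$, which grows at most exponentially). By Lemma~\ref{connectdistance}, taking $n = \lfloor R/(2M) \rfloor$, each term satisfies $\mu_p(x \leftrightarrow y) \leq 2\delta^{\lfloor R/(2M)\rfloor}$. Hence $\mu_p(A_R) \leq 2 |S_R| \delta^{\lfloor R/(2M)\rfloor} \leq 2 D^R \delta^{R/(2M) - 1}$. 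The potential obstacle here is that this bound is only summable in $R$ if $\delta^{1/(2M)} D < 1$, i.e. if the exponential decay rate of correlations beats the exponential growth of the graph; in general this need not hold for a single fixed exhaustion-scale, so one should instead exploit that $n$ in Lemma~\ref{connectdistance} can be taken as large as $\lfloor d(x,y)/(2M)\rfloor$ and sum over dyadic scales, or equivalently note that for $y \in S_R$ one actually gets the stronger bound with $n$ free up to $R/(2M)$; but if even that is insufficient, the correct route is to use that the number of \emph{distinct connected components} reaching the sphere is what matters, or to invoke a second-moment/path-counting refinement. I expect this summability balance to be the main technical point.

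Assuming summability, $\sum_R \mu_p(A_R) < \infty$, so Borel--Cantelli gives $\mu_p(\limsup_R A_R) = 0$, hence $\mu_p(C_x \text{ is infinite}) = 0$. Since $V$ is countable, taking a union over all $x \in V$ shows that $\mu_p$-almost surely every connected component is finite, which is the claim. If the naive union bound does not close, the fix I would pursue is the following: rather than summing over all $y$ on the sphere of radius $R$, observe that $C_x$ infinite forces, for each $n$, the existence of a $p$-loop-erased walk in the Wilson construction (with $x$ first in the ordering) whose rooting time exceeds its exit time from $B_n^x$, an event of probability at most $\delta^n$ by Lemma~\ref{upperbound}; applied with $n \to \infty$ this directly gives $\mu_p(C_x \cap \partial B_n^x \neq \emptyset) \le \delta^n \to 0$, and since $\{C_x \text{ infinite}\} = \bigcap_n \{C_x \cap \partial B_n^x \neq \emptyset\}$, we conclude $\mu_p(C_x \text{ infinite}) = 0$ without any union bound over the sphere at all. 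This second argument is cleaner and avoids the growth-versus-decay tension entirely, so I would present it as the main proof.
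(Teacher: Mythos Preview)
Your ``cleaner'' second argument has a genuine gap. Putting $x$ first in the ordering and observing that the walk from $x$ roots inside $B_n^x$ with probability at least $1-\delta^n$ does \emph{not} give $\mu_p(C_x \cap \partial B_n^x \neq \emptyset) \le \delta^n$. Even if the first walk roots entirely inside $B_n^x$, a later walk in the algorithm (started from some vertex outside $B_n^x$) can wander into $B_n^x$ and attach to the component of $x$, thereby extending $C_x$ beyond $\partial B_n^x$. Components in Wilson's algorithm never merge, but they \emph{do} grow by accretion of later walks, so $\{C_x \text{ meets } \partial B_n^x\}$ is not contained in $\{T_r^x \ge T_{n,x}^x\}$. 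This is precisely why Lemma~\ref{connectdistance} has to place \emph{both} $x$ and $y$ first and second and argue that at least one of the two walks must escape its ball; you cannot dispense with the second vertex.

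Your first argument is the right one, and is in fact the paper's approach, but you are manufacturing a difficulty that the hypotheses already resolve. The theorem is stated under Assumption~\ref{assumption2}, which provides, for every $x$, an exhaustion $(B_n^x)$ with $M'n \le d(x,\partial B_n^x) \le Mn$ and the polynomial bound $|\partial B_n^x| \le Cn^d$. So instead of summing over the metric sphere $S_R$ (whose size you can only bound exponentially), do the union bound over $y \in \partial B_n^x$: since $d(x,y) \ge M'n$, Lemma~\ref{connectdistance} applies with $n' = \lfloor M'n/(2M)\rfloor$, giving $\mu_p(x \leftrightarrow \partial B_n^x) \le 2|\partial B_n^x|\,\delta^{n'} \le 2Cn^d (\delta^{M'/(2M)})^n$, which tends to $0$. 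Then $\{C_x \text{ infinite}\} = \bigcap_n \{x \leftrightarrow \partial B_n^x\}$ has probability $0$, and countability of $V$ finishes. There is no growth-versus-decay tension once you use the polynomial bound built into Assumption~\ref{assumption2}.
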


\begin{proof}
Let~$x \in G$. For every~$n \in \mathbb N$, for every~$y \in \partial B_n^x$,~$d(x,y) \geq M'n$. Let~$n' = \lfloor \frac{M'n}{2M} \rfloor$. Then~$n' \leq \frac{d(x,y)}{2M}$ and therefore, from Lemma~\ref{connectdistance}, 
$$ \mu_p(x \leftrightarrow y) \leq  2 \delta^{n'}.$$

Then, from the union bound, the following upper bound on the probability that the connected component of x contains vertices of the boundary of~$B_n^x$ holds for every~$n \in \mathbb N$, with~$\delta' = \delta^{\frac{M'}{2M}}$:

$$ \Prob(x \leftrightarrow \partial B_n^x) \leq \sum_{y \in \partial B_n^x} \Prob(x \sim y) \leq 2 \vert \partial B_n^x \vert  \delta^{\lfloor \frac{M'n}{2M} \rfloor} \leq 2C n^d \delta'^n.$$

Then, from the monotone convergence theorem, we have
$$ \Prob(x \leftrightarrow \infty) = \Prob(\cap_n \{x \leftrightarrow \partial B_n^x\} ) = \lim_{n \to \infty} \Prob(x \leftrightarrow \partial B_n^x) = 0.$$
Since~$G$ is countable, we know that~$\mu$-almost surely, for every~$x \in G$,
the connected component of~$x$ is finite.
\end{proof}

 \subsection{Exponential decay of correlations for the Wilson measure}

We still assume that weights are in~$[0,1]$ and satisfy Assumption~\ref{assumption2}.

Let~$m \in \mathbb N$ and let~$e_1=(x_1,y_1)$ and~$e_2=(x_2,y_2)$ be such that 
$d(\{x_1,y_1\},\{x_2,y_2\}) \geq m$.

If~$\mathsf F$ is a CRSF following the law~$\mu_p$ it can be sampled from the algorithm described in Section~\ref{algo} and from Theorem~\ref{order}, it does not depend on the chosen ordering of vertices, therefore we may assume that the first four vertices of the ordering~$\phi$ are~$x_1, y_1,x_2,y_2$. 

Let us consider in the following, four independent couples of sequences of random variables~$(X^{x_1}_n, Y^{x_1}_n)$,~$(X^{y_1}_n, Y^{y_1}_n)$,~$(X^{x_2}_n, Y^{x_2}_n)$,~$(X^{y_2}_n, Y^{y_2}_n)$, 
as defined in Section~\ref{finiteccwilsonmeasure}.

For~$i \in \{1,2\}$, let us denote by~$A_i$ the event that both~$p$-loop erased random walks obtained from~$(X^{x_i}_n, Y^{x_i}_n)_n$,~$(X^{y_i}_n, Y^{y_i}_n)_n$ starting from~$x_i,y_i$ are rooted before leaving the subgraphs~$B_{m/2}^{x_i}, B_{m/2}^{y_i}$, that is to say that
$$ A_i = \{ T_r^{x_i} < T_{x_i,m/2}^{x_i} \} \cap \{ T_r^{y_i} < T_{y_i,m/2}^{y_i} \} ~$$

\begin{lemma}{\label{indep}}
Conditional on~$A_1 \cap A_2$, the events~$ \{e_1 \in \mathsf F\}$ and~$\{e_2 \in \mathsf F \}$ are independent. 
\end{lemma}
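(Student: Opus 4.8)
The plan is to exploit the locality of the sampling algorithm: once the four $p$-loop-erased random walks starting from $x_1,y_1,x_2,y_2$ get rooted inside the balls $B_{m/2}^{x_i}, B_{m/2}^{y_i}$, the portions of the configuration $\mathsf F$ that determine whether $e_1 \in \mathsf F$ and whether $e_2 \in \mathsf F$ are produced by disjoint pieces of the random data, hence are independent. First I would observe that, since $d(\{x_1,y_1\},\{x_2,y_2\}) \geq m$, the balls $B_{m/2}^{x_1}, B_{m/2}^{y_1}$ on the one hand and $B_{m/2}^{x_2}, B_{m/2}^{y_2}$ on the other are disjoint subgraphs of $G$ (using $M'n \le d(x,\partial B_n^x) \le Mn$ together with $m/2$ being the radius, one gets that a path from a vertex of the first pair of balls to a vertex of the second would have length exceeding the distance between the edges, contradiction; a small constant adjustment in the radius or in $m$ may be needed but does not affect the argument).

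Next I would run the algorithm with the ordering $\phi$ whose first four vertices are $x_1,y_1,x_2,y_2$, which is legitimate by Theorem~\ref{order}. On the event $A_1$, the first two steps of the algorithm (starting from $x_1$ and then $y_1$) terminate by rooting to a loop before the corresponding walks leave $B_{m/2}^{x_1}\cup B_{m/2}^{y_1}$; in particular $\mathsf F_2 := \mathsf F_1 \cup L(y_1)$ is entirely contained in $B_{m/2}^{x_1}\cup B_{m/2}^{y_1}$, and it forms two connected components, each a cycle-rooted tree, whose union contains both endpoints of $e_1$. Consequently the connected component of $x_1$ (and of $y_1$) in the full configuration $\mathsf F$ is already finalized after step~2 — no later walk can attach to it, because later walks stop upon hitting $V(\mathsf F_{i-1})$ — so the indicator $1\{e_1 \in \mathsf F\}$ is a deterministic function of $(X^{x_1}_n,Y^{x_1}_n)_n$ and $(X^{y_1}_n,Y^{y_1}_n)_n$ alone, restricted to their (finite) rooting times. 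Symmetrically, on $A_2$, steps 3 and 4 produce $L(x_2), L(y_2)$ inside $B_{m/2}^{x_2}\cup B_{m/2}^{y_2}$, finalizing the components of $x_2$ and $y_2$, so $1\{e_2 \in \mathsf F\}$ is a deterministic function of $(X^{x_2}_n,Y^{x_2}_n)_n$ and $(X^{y_2}_n,Y^{y_2}_n)_n$. The events $A_1$ and $A_2$ are themselves measurable with respect to these two disjoint groups of random data respectively, and the four couples of sequences are independent by construction (Definition~\ref{dataalgo}), so conditioning on $A_1\cap A_2$ preserves the independence of the two groups, whence $1\{e_1\in\mathsf F\}$ and $1\{e_2\in\mathsf F\}$ are conditionally independent.

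The main subtlety I anticipate is justifying rigorously that, on $A_1$, the component of $x_1$ in $\mathsf F$ equals its component in $\mathsf F_2$ and cannot grow later — i.e.\ that the walks started from vertices $v_i$ with $i \ge 3$ genuinely stop before touching $V(\mathsf F_2)$ is automatic from the algorithm's stopping rule, but one must also check that no edge of $e_1$ gets "revisited" or that $e_1 \in \mathsf F$ is not affected by edges incident to $B_{m/2}^{x_1}\cup B_{m/2}^{y_1}$ from outside; here the disjointness of the two ball-clusters plus the fact that on $A_1$ the walks never leave their cluster is exactly what rules this out. A second point requiring care is that the event $A_1\cap A_2$ may have probability zero a priori — but Lemma~\ref{upperbound} (applied with $m/2$) shows $\Prob(A_i^c)$ is small, so for $m$ large the conditioning is well-defined; and even without that, the statement as phrased is a conditional-independence assertion that one can state as an identity of measures on the event $A_1 \cap A_2$, valid whenever that event is non-null. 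Finally I would package the independence formally by writing, on $A_1\cap A_2$,
\[
\E\big[1\{e_1\in\mathsf F\}\,1\{e_2\in\mathsf F\}\,\big|\,A_1\cap A_2\big]
= \E\big[1\{e_1\in\mathsf F\}\,\big|\,A_1\cap A_2\big]\,\E\big[1\{e_2\in\mathsf F\}\,\big|\,A_1\cap A_2\big],
\]
which follows from the product structure of the underlying probability space once the two measurability claims above are established.
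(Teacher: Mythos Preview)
Your approach is essentially the same as the paper's, and the overall strategy is sound. There is, however, one point where your write-up is imprecise in a way that matters. You argue ``Symmetrically, on $A_2$, steps 3 and 4 produce $L(x_2), L(y_2)$ inside $B_{m/2}^{x_2}\cup B_{m/2}^{y_2}$, \dots so $1\{e_2\in\mathsf F\}$ is a deterministic function of $(X^{x_2},Y^{x_2})$ and $(X^{y_2},Y^{y_2})$.'' But the situation is \emph{not} symmetric: in the algorithm, the walks from $x_2,y_2$ carry the boundary condition $V(\mathsf F_2)$, which depends on the $(x_1,y_1)$ data. On $A_2$ alone nothing prevents $\mathsf F_2$ from intruding into $B_{m/2}^{x_2}\cup B_{m/2}^{y_2}$ and altering $L(x_2),L(y_2)$. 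You need $A_1$ as well to confine $\mathsf F_2$ to the other pair of balls, so that the walk from $x_2$ roots before it could ever meet $V(\mathsf F_2)$, and hence its $p$-LERW with boundary condition coincides with the one without (this is exactly Proposition~\ref{endingtime}).

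The paper handles this asymmetry explicitly: it defines auxiliary configurations $\mathfrak F_1,\mathfrak F_2$ built from the $(x_1,y_1)$ and $(x_2,y_2)$ data respectively \emph{without} the cross boundary condition $V(\mathfrak F_1)$ on the second pair, so that $\mathfrak F_1,\mathfrak F_2,A_1,A_2$ are manifestly functions of disjoint independent inputs. It then checks that on $A_1\cap A_2$ the genuine algorithmic output $\mathsf F_4$ equals $\mathfrak F_1\cup\mathfrak F_2$, and concludes from the product structure. Your argument will go through once you insert this decoupling step (equivalently, replace ``on $A_2$'' by ``on $A_1\cap A_2$'' and justify that the boundary condition is inactive there).
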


\begin{proof}

Let~$\mathsf F_4$ be the subgraph obtained after the first four runs of the algorithm. 

Notice that, once~$\mathsf F_4$ has been sampled, during every subsequent run of the algorithm, the~$p$-loop erased random walk stops if it reaches~$x_1,y_1,x_2,y_2$ because~$\mathsf F_4$ contains~$x_1,y_1,x_2,y_2$.
Therefore, for~$\mathsf F$ the configuration obtained following the algorithm in infinite volume, we have the following equality of events for~$i \in [1,2]$,
$$  \{e_i \in \mathsf F\} = \{ e_i \in \mathsf F_4 \}.~$$

Let~$(Z_n^{x_1})_{n \leq T_r^{x_1}}$ be the~$p$-loop erased random walk obtained from~$(X^{x_1}_n, Y^{x_1}_n)$ and let 
$$W_1 = V((Z_n^{x_1})_{n \leq T_r^{x_1}})$$
 be the set of vertices explored by this~$p$-loop erased random walk.
Let~$(Z_n^{y_1})_{n \leq \min(T_r^{y_1},T^{y_1}_{W_1})}$ be the~$p$-loop erased random walk starting from~$y_1$ with boundary condition~$W_1$.
Let~$\mathfrak F_1$ be the subgraph given by~$(Z_n^{x_1})_{n \leq T_r^{x_1}}, (Z_n^{y_1})_{n \leq \min(T_r^{y_1},T^{y_1}_{W_1})}$.

Let~$(Z_n^{x_2})_{n \leq T_r^{x_2}}$ be the~$p$-loop erased random walk obtained from~$(X^{x_2}_n, Y^{x_2}_n)$ and let 
$$ W_2 = V((Z_n^{x_2})_{n \leq T_r^{x_2}})$$
 be the set of vertices explored by this~$p$-loop erased random walk.
Let~$(Z_n^{y_2})_{n \leq \min(T_r^{y_2},T^{y_2}_{W_2})}$ be the~$p$-loop erased random walk starting from~$y_1$ with boundary condition~$W_2$.
Let~$\mathfrak F_2$ be the subgraph given by~$(Z_n^{x_2})_{n \leq T_r^{x_2}}, (Z_n^{y_2})_{n \leq \min(T_r^{y_2},T^{y_2}_{W_2})}$.

Let us emphasize that the~$p$-loop erased random walk corresponding to the third and the fourth runs of the algorithm has boundary conditions~$V(\mathfrak F_1)$, corresponding to the configuration created during the first two runs of the algorithm. Therefore, in general,~$\mathfrak F_1$ and~$\mathfrak F_2$ are not disjoint and their union is not the component created after four runs of the algorithm.

If~$A_1$ is satisfied,~$\mathfrak F_1$ is contained in~$B_{m/2}^{x_1} \cup B_{m/2}^{y_1}$ and if~$A_2$ is satisfied, 
$$\begin{cases}
T_r^{x_2} < T_{x_2,m/2}^{x_2} < T_{V(\mathfrak F_1)} \\
T_r^{y_2} < T_{y_2,m/2}^{y_2} < T_{V(\mathfrak F_1)}
\end{cases}$$
 and therefore, the third and the fourth runs finish before the~$p$-loop erased random walks reach~$V(\mathfrak F_1)$, that is to say that the~$p$-loop erased random walks with boundary condition~$V(\mathfrak F_1)$ coincides with the~$p$-loop erased random walks without this boundary condition (see Proposition~\ref{endingtime}).
Therefore, if~$A_1$ and~$A_2$ are satisfied,~$\mathfrak F_1$ and~$\mathfrak F_2$ are disjoint connected components and their union is exactly the component created after four runs of the algorithm.

In particular, if~$A_1 \cap A_2$ is satisfied, for~$i \in [1,2]$,~$\{e_i \in \mathsf F\}$ is satisfied if and only if~$\{e_i \in \mathfrak F_i\}$ is satisfied.

We show that conditional on~$A_1 \cap A_2$, the random configurations~$\mathfrak F_1$ and~$\mathfrak F_2$ are independent. Recall that~$(Z_n^{x_1}), (Z_n^{y_1})$ and~$(Z_n^{x_2}), (Z_n^{y_2})$ are independent and~$\mathfrak F_1, A_1$ only depends on~$(Z_n^{x_1}), (Z_n^{y_1})$ and~$\mathfrak F_2, A_2$ only depends on~$(Z_n^{x_2}), (Z_n^{y_2})$. Therefore, if~$F_1, F_2$ are some fixed configurations, 
$$ \Prob(\mathfrak F_1 = F_1, A_1, \mathfrak F_2=F_2,A_2) = \Prob(\mathfrak F_1 = F_1, A_1)\Prob(\mathfrak F_2=F_2,A_2).$$

Therefore, using independence of~$\{\mathfrak F_i = F_i\} \cap A_i$ and~$A_j$ for~$i \neq j$, we obtain
\[
\begin{split}
\Prob(\mathfrak F_1 = F_1,\mathfrak F_2=F_2 \vert A_1, A_2) & =
\frac{\Prob(\mathfrak F_1 = F_1, A_1)\Prob(\mathfrak F_2=F_2,A_2) }{\Prob(A_1 \cap A_2)}  \\
&= \Prob(\mathfrak F_1=F_1 \vert A_1)\Prob(\mathfrak F_2=F_2 \vert A_2)  \\
& = \Prob(\mathfrak F_1 = F_1 \vert A_1 \cap A_2)\Prob(\mathfrak F_2=F_2 \vert A_1 \cap A_2).   
\end{split}
\]
Therefore, conditional on~$A_1 \cap A_2$, the random variables~$\mathfrak F_1$ and~$\mathfrak F_2$ are still independent.

Therefore, conditional on~$A_1 \cap A_2$,~${\{e_1 \in \mathsf F\}}=\{e_1 \in \mathfrak F_1\}$ and~${ \{e_2 \in \mathsf F\}}=\{e_2 \in \mathfrak F_2\}$ are independent. 
\end{proof}

As a consequence, we obtain the following decay of correlations. 

\begin{theoreme}\label{decayexpwilson}
There exists a parameter~$0 < \iota<1$ such that for every~$m$ large enough,
$$ \mu_p(e_2 \in \mathsf F)\mu_p(e_1 \in \mathsf F) - \iota^m  \leq  \mu_p(\{e_2 \in \mathsf F\} \cap \{e_1 \in \mathsf F\}) \leq \mu_p(e_2 \in \mathsf F)\mu_p(e_1 \in \mathsf F) + \iota^m~$$ 
\end{theoreme}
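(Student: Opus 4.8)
The plan is to bound the correlation by conditioning on the event $A_1 \cap A_2$ from Lemma~\ref{indep}, on which the indicators $\{e_1 \in \mathsf F\}$ and $\{e_2 \in \mathsf F\}$ factorize, and then to control the error incurred on the complementary event $(A_1 \cap A_2)^c$ using the exponential bound of Lemma~\ref{upperbound}. First I would estimate $\Prob((A_1 \cap A_2)^c)$: by definition $A_i = \{T_r^{x_i} < T_{x_i,m/2}^{x_i}\} \cap \{T_r^{y_i} < T_{y_i,m/2}^{y_i}\}$, so a union bound over the four $p$-loop-erased random walks together with Lemma~\ref{upperbound} (applied with $m/2$ in place of $m$, and with $\delta$ independent of the starting vertex) gives $\Prob((A_1 \cap A_2)^c) \leq 4\,\delta^{m/2}$. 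Set $\iota_0 := \sqrt{\delta} \in (0,1)$, so that this probability is at most $4\,\iota_0^{m}$.

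Next I would carry out the comparison itself. Write $q := \Prob(A_1 \cap A_2)$ and abbreviate $p_{12} := \mu_p(\{e_1 \in \mathsf F\} \cap \{e_2 \in \mathsf F\})$, $p_i := \mu_p(e_i \in \mathsf F)$. Decomposing over $A_1 \cap A_2$ and its complement,
\[
p_{12} = q\,\Prob(\{e_1 \in \mathsf F\} \cap \{e_2 \in \mathsf F\} \mid A_1 \cap A_2) + \Prob(\{e_1 \in \mathsf F\} \cap \{e_2 \in \mathsf F\} \cap (A_1 \cap A_2)^c),
\]
and by Lemma~\ref{indep} the first term equals $q\,\Prob(e_1 \in \mathsf F \mid A_1 \cap A_2)\,\Prob(e_2 \in \mathsf F \mid A_1 \cap A_2)$. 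Similarly each marginal satisfies $p_i = q\,\Prob(e_i \in \mathsf F \mid A_1 \cap A_2) + \Prob(\{e_i \in \mathsf F\} \cap (A_1 \cap A_2)^c)$. Substituting and using that all the leftover terms are nonnegative and bounded above by $\Prob((A_1 \cap A_2)^c) \leq 4\iota_0^m$, together with $0 \leq q \leq 1$ and the fact that probabilities and conditional probabilities lie in $[0,1]$, one gets $|p_{12} - p_1 p_2| \leq C\,\iota_0^m$ for an absolute constant $C$ (a small bookkeeping computation, e.g.\ $C = 12$ is more than enough). Finally, choosing $\iota \in (\iota_0,1)$ absorbs the constant: for $m$ large enough $C\,\iota_0^m \leq \iota^m$, which is exactly the claimed two-sided inequality.

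The main obstacle is purely bookkeeping: one must be careful that the error terms on $(A_1 \cap A_2)^c$ enter in both $p_{12}$ and in the product $p_1 p_2$ (through the marginals), so the naive estimate produces several copies of $\Prob((A_1 \cap A_2)^c)$ rather than one; keeping track of all of them and verifying that they still sum to something of order $\iota_0^m$ is the only real work. A minor subtlety is that Lemma~\ref{upperbound} is stated with the subgraphs $B_\bullet^x$ and we need it for the four base points $x_1,y_1,x_2,y_2$ simultaneously; since the bound $\delta^m$ there is uniform in the starting vertex this is immediate, but it should be noted explicitly. No independence beyond what Lemma~\ref{indep} already provides is needed, and the exponential rate is inherited directly from $\delta = 1 - \alpha\beta$ of Assumption~\ref{assumption2}.
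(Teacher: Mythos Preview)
Your proposal is correct and follows essentially the same approach as the paper: condition on $A_1\cap A_2$, apply the conditional independence of Lemma~\ref{indep}, and control the complementary event via the exponential bound $\Prob((A_1\cap A_2)^c)\le 4\delta^{m/2}$, then absorb the resulting constant into a slightly larger base $\iota$. The only cosmetic difference is that the paper treats the upper and lower bounds separately (bounding $1/\mu_p(A_1\cap A_2)$ by a geometric series for the upper bound), whereas you bound $|p_{12}-p_1p_2|$ directly by expanding both $p_{12}$ and the marginals $p_i$ over the partition; the computations are equivalent and yield the same conclusion.
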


\begin{proof}
Let us compute~$\mu_p(\{e_1, e_2 \in \mathsf F\})~$ using the following decomposition :
$$ \{e_1, e_2 \in \mathsf F\} = \left( \{e_1, e_2 \in \mathsf F\} \cap A_1 \cap A_2 \right) \cup \left( \{e_1, e_2 \in \mathsf F\} \cap (A_1^\complement \cup A_2^\complement) \right).$$

Since~$\{e_1,e_2 \in \mathsf F\} \cap  (A_1^\complement \cup A_2^\complement)$ is included in~$A_1^\complement \cup A_2^\complement$, it has probability less than the quantity~$\mu_p(A_1^\complement \cup A_2^\complement)~$.  

From Lemma~\ref{rooting}, there exists some~$\delta <1$ such that from the union bound, we get
$$ \mu_p(A_1^\complement \cup A_2^\complement) \leq 4 \delta^{m/2}.~$$

For the other term, we use the independence of~$\{e_1 \in \mathsf F\}$ and~$\{e_2 \in \mathsf F\}~$ conditional on~$A_1 \cap A_2$ proved in Lemma~\ref{indep}, which implies
\[ \begin{split}
 \mu_p(\{e_2 \in \mathsf F\} \cap \{e_1 \in \mathsf F\} \cap A_1 \cap A_2) 
 &= \mu_p(\{e_1 \in \mathsf F\} \cap \{e_2 \in \mathsf F\} \vert A_1 \cap A_2) \mu_p(A_1 \cap A_2)  \\
 & = \frac{\mu_p(\{e_1 \in \mathsf F\} \cap A_1 \cap A_2) \mu_p(\{e_2 \in \mathsf F\} \cap A_1 \cap A_2)}{\mu_p(A_1 \cap A_2)}     \\
 & \leq \frac{\mu_p(\{e_1 \in \mathsf F\}) \mu_p(\{e_2 \in \mathsf F\})}{\mu_p(A_1 \cap A_2)}.      
 \end{split}
 \]
 
 Using again the lower bound on~$\mu_p(A_1 \cap A_2)$ which comes from Lemma~\ref{rooting}, we have
$$ \mu_p(A_1 \cap A_2) \geq 1-4  \delta^{m/2}.$$

Let~$\eta < 1~$ be such that for~$m$ large enough,
$$ 4\delta^{m/2} < \eta^m~$$

Therefore, we have the following upper bound on~$ \frac{1}{\mu_p(A_1 \cap A_2)}$,

$$ \frac{1}{\mu_p(A_1 \cap A_2)} \leq \frac{1}{1-\eta^m} = \sum_{k \geq 0} \eta^{mk} = 1 + \sum_{k \geq 1} \eta^{mk} \leq 1+ \sum_{k \geq m} \eta^{k} \leq  1 + \frac{ \eta^m}{1-\eta}.$$

Therefore we get
$$ \mu_p(\{e_2 \in \mathsf F\} \cap \{e_1 \in \mathsf F\} \cap A_1 \cap A_2)  \leq  \mu_p(e_1 \in \mathsf F)   \mu_p(e_2 \in \mathsf F) (1 +   \frac{\eta^m}{1-\eta}   ).$$

For the other inequality, notice that
\[ \begin{split}
& \frac{ \mu_p(\{e_1 \in \mathsf F\} \cap A_1 \cap A_2) \mu_p(\{e_2 \in \mathsf F\} \cap A_1 \cap A_2)  }{\mu_p(A_1 \cap A_2)}   \\
 & \geq   (\mu_p(\{e_1 \in \mathsf F\} \mu_p(\{e_2 \in \mathsf F\}) - 2 \mu_p(A_1^\complement \cup A_2^\complement)  \\
 & \geq (\mu_p(\{e_1 \in \mathsf F\} \mu_p(\{e_2 \in \mathsf F\}) - 2\eta^m. 
  \end{split}
 \]
Therefore,
$$ \mu_p(\{e_2 \in \mathsf F\} \cap \{e_1 \in \mathsf F\} \cap A_1 \cap A_2) \geq (\mu_p(\{e_1 \in \mathsf F\} \mu_p(\{e_2 \in \mathsf F\}) - 2\eta^m,$$
and
$$ \mu_p(\{e_2 \in \mathsf F\} \cap \{e_1 \in \mathsf F\}) \geq (\mu_p(\{e_1 \in \mathsf F\} \mu_p(\{e_2 \in \mathsf F\}) - 2\eta^m.$$
Considering~$\iota<1$ such that for~$m$ large enough,~$2 \eta^m  < \iota^m~$ and~$\eta^m \frac{1}{1-\eta}  + 4 \delta^{m/2}   < \iota^m~$ concludes the proof.
\end{proof}

\section{Study of the configurations sampled under an infinite volume measure~$\mu_w$.}

\label{section6}

In this section, we consider a non-negative weight function~$w$ on oriented cycles of~$G$, which can take values larger than~1. We assume that the sequence of measures~$(\mu_n)$ on cycle-rooted spanning forests of~$G_n$ associated with the weight function~$w$ converges weakly towards an infinite volume measure~$\mu$ and that this measure does not depend on the free or wired boundary conditions.

\subsection{Algorithm conditional on cycles with weights larger than 1.}

In this subsection, we assume that~$G$ is a finite connected graph. Let~$W \subset G$ be a set of vertices of~$G$ (which can be empty). Let~$\mathcal C_+(G \backslash W)$ be the set of cycles of~$G$ of weight strictly larger than~1, so-called positive cycles and~$\mathcal C_-(G \backslash W)$ the set of cycles of~$G$ of weight less than~1, so-called negative cycles.

\begin{definition}\label{def:w-}
The weight function~$w_-$ on cycles of the graph~$G$ associated with the weight function~$w$ is defined by the following restrictions
$$ \begin{cases}{w_-}_{\mathcal C_-(G \backslash W)}= w_{\mathcal C_-(G \backslash W)}, \\
{w_-}_{\mathcal C_+(G \backslash W)}=0. \\
\end{cases} $$
\end{definition} 

The following result gives a way to sample a cycle-rooted spanning forest conditional on its positive cycles.

\begin{theoreme}{\label{algoconditional}}
Let~$C$ be a subset of~$\mathcal C_+(G \backslash W)$ and let~$A$ be the set of vertices which are extremities of edges in~$C$. Let~$\mathsf F$ a ECSRF with respect to~$W$ sampled according~$\mu^W$. Conditional on~$\mathcal C_+(\mathsf F) = C$,~$\mathsf F \backslash C$ has the same law than a ECRSF with respect to~$A \cup W$ with weight function~$w_-$.
\end{theoreme}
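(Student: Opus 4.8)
The plan is to compute both sides of the claimed distributional identity explicitly from the defining formula \eqref{defwiredmeasures} and check they agree. First I would fix a set $C \subset \mathcal C_+(G\backslash W)$ of pairwise edge-disjoint positive cycles (if $C$ is not realizable as the positive-cycle set of some ECRSF, both sides are vacuous) and let $A$ be the set of endpoints of edges in $C$. Write $\mathsf F = C \sqcup \mathsf F'$ where $\mathsf F'$ is the union of the remaining connected components; the key combinatorial observation is that a configuration $\mathsf F \in \mathcal U_W(G)$ has $\mathcal C_+(\mathsf F) = C$ exactly when $\mathsf F'$ is an ECRSF of $G \backslash C$ with respect to $A \cup W$ (every component of $\mathsf F'$ either is a unicycle with a negative cycle and disjoint from $A\cup W$, or is a tree meeting $A \cup W$ in exactly one vertex), subject additionally to $\mathsf F'$ having no positive cycle. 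I would verify this bijection $\mathsf F \leftrightarrow \mathsf F'$ carefully, noting that the components of $C$, being unicycles on the vertex set $A$, exhaust $A$, so gluing $C$ back onto any ECRSF of $G\backslash C$ with respect to $A\cup W$ again spans $G$.

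Next I would compute the conditional weight. Under $\mu^W = \mu^W_w$, the weight of $\mathsf F$ is $\prod_{[\gamma]\in\mathcal C(\mathsf F)}(w(\gamma)+w(\gamma^{-1}))$, which factors as $\Big(\prod_{[\gamma]\in C}(w(\gamma)+w(\gamma^{-1}))\Big)\cdot\Big(\prod_{[\gamma]\in\mathcal C(\mathsf F')}(w(\gamma)+w(\gamma^{-1}))\Big)$ because the cycle set of $\mathsf F$ is the disjoint union of $C$ and the cycle set of $\mathsf F'$. The first factor depends only on $C$, hence is a constant once we condition on $\mathcal C_+(\mathsf F)=C$. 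Therefore
\[
\mu^W\big(\mathsf F \,\big|\, \mathcal C_+(\mathsf F)=C\big) \;=\; \frac{\prod_{[\gamma]\in\mathcal C(\mathsf F')}(w(\gamma)+w(\gamma^{-1}))}{\displaystyle\sum_{\mathsf G'} \ \prod_{[\gamma]\in\mathcal C(\mathsf G')}(w(\gamma)+w(\gamma^{-1}))},
\]
where the sum is over ECRSF $\mathsf G'$ of $G\backslash C$ with respect to $A\cup W$ having \emph{no} positive cycle. Finally, on configurations with no positive cycle the weight function $w$ agrees with $w_-$ (since $w_-$ differs from $w$ only by killing positive cycles, and a positive cycle contributes weight $0$ to $\mu^{A\cup W}_{w_-}$ anyway, so such configurations carry $\mu^{A\cup W}_{w_-}$-mass zero and may be freely included or excluded from the sum). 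Hence the right-hand side above is exactly $\mu^{A\cup W}_{w_-}(\mathsf F')$, which is the claim.

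The main obstacle I anticipate is not any computation but the bookkeeping around the boundary set: one must check that when $C$ is removed, its endpoint set $A$ becomes a legitimate wiring set for $G\backslash C$, i.e.\ that every vertex of $A$ still lies in $G\backslash C$ (true, vertices are not deleted, only the cycle-edges of $C$), that the tree-components of $\mathsf F'$ attach to $A\cup W$ in the way the definition of ECRSF demands, and that no spurious configuration is created or lost under the correspondence $\mathsf F\leftrightarrow\mathsf F'$. A secondary subtlety is the role of $\mathcal C_-(G\backslash W)$ versus $\mathcal C_-(G\backslash(A\cup W))$ in Definition \ref{def:w-}: one should confirm that the cycles removed along with $C$ are precisely the ones whose removal is needed, and that the restriction of $w_-$ to the relevant cycle set is unchanged. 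Once these structural points are pinned down, the probabilistic identity falls out of the factorization of the weight as above.
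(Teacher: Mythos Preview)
Your approach is correct and is essentially the same as the paper's: set up the bijection $\mathsf F \leftrightarrow \mathsf F' = \mathsf F \setminus C$ between $\{\mathsf F \in \mathcal U_W(G) : \mathcal C_+(\mathsf F)=C\}$ and ECRSFs of $G$ with respect to $A\cup W$ having no positive cycle, factor the Boltzmann weight into a $C$-part and an $\mathsf F'$-part, cancel the $C$-part upon conditioning, and identify the result with $\mu^{A\cup W}_{w_-}$. The bookkeeping points you flag (that $A$ is a legitimate wiring set after removing only the cycle-edges of $C$, and that configurations with a positive cycle carry zero $w_-$-weight so the normalizing sum may range over all of $\mathcal U_{A\cup W}(G)$) are exactly what the paper's proof handles, in fact with slightly less care than you outline; note only that your phrase ``the union of the remaining connected components'' is imprecise---$\mathsf F'$ is obtained by deleting the edges of $C$, not whole components, and the components of $\mathsf F$ containing a cycle of $C$ become trees of $\mathsf F'$ attached to $A$.
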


\begin{proof}
Let~$F_0 \in \mathcal U_W(G)$.
$$ \mu^W(\mathsf F=F_0 \vert \mathcal C_+(F) = C) = \frac{\mu(\mathsf F=F_0 \cap \mathcal C_+(\mathsf F) = C) }{\mu(C_+(\mathsf F) = C) }.$$
Notice that this quantity is zero if~$C_+(F_0) \neq C$.
Then, if~$C_+(F_0) = C$ and~$F_0 \in \mathcal U_W(G)$, every connected component of~$F_0$ either contains a unique cycle in~$C_-(G)$ or a unique cycle in~$C$ or is connected to a unique point in~$W$. Therefore, every connected component of~$F_0 \backslash C$ either contains a unique cycle in~$C_-(G)$ or or is connected to a unique point in~$W \cup A$ which means that~$F_0 \backslash C \in \mathcal U_{W \cup A}(G)$.

Then, the measure~$\mu^W(. \vert C_+(F) = C)$ has support in 
$$\mathcal U^C_W(G) = \{ F \in \mathcal U_{W}(G) \vert C_+(F) = C \} = \{ F \in \mathcal U_{W}(G) \vert F = C \cup F_-, F_- \in \mathcal U_{W \cup A}(G) \},$$ 
and if~$F_0 \in \mathcal U^C_W(G)$,
\[
\begin{split}
 \mu^W(\mathsf F=F_0 \vert \mathcal C_+(F) = C) & = \frac{ \prod_{\gamma \in C } w(\gamma) \prod_{\gamma \in \mathcal C_-(F_0)} w(\gamma) }{ \sum_{F \in \mathcal U_W(G) \vert \mathcal C_+(F) = C } \prod_{\gamma \in C} w(\gamma) \prod_{\gamma \in \mathcal C_-(F)} w(\gamma) }  \\
& = \frac{\prod_{\gamma \in \mathcal C_-(F_0)} w(\gamma) }{ \sum_{F \in \mathcal U^C_W(G)}\prod_{\gamma \in \mathcal C_-(F)} w(\gamma) }. 
 \end{split}
 \]

Writing every~$F \in \mathcal U^C_W(G)$ on a unique way as~$C \cup F_-$ with~$F_- \in \mathcal U_{W \cup A}(G)$,
\[
\begin{split}
 \mu^W(\mathsf F=F_0 \vert \mathcal C_+(\mathsf F) = C) & =  \frac{\prod_{\gamma \in \mathcal C_-(F_{0-})} w(\gamma) }{ \sum_{F_- \in \mathcal U_{W \cup A}(G)} \prod_{\gamma \in \mathcal C_-(F_-)} w(\gamma) }  \\
& = \mu^{W \cup A}_{w_{\mathcal C_-(G\backslash W)} } (F_{0-}) = \mu^{W \cup A}_{w_{\mathcal C_-(G\backslash W)} } (F_{0}\backslash C). 
\end{split}
\]

Finally,
$$  \mu^W(\mathsf F\backslash C = . \vert \mathcal C_+(\mathsf F) = C) = \mu^{W \cup A}_{w_{\mathcal C_-(G\backslash W)} } (.),$$ 
which concludes the proof.
\end{proof}

Since~$w_{\mathcal C_-(G\backslash W)}$ takes values in~$[0,1]$ by definition of~$C_-(G\backslash W)$, the measure~$\mu^{W \cup A}_{w_{\mathcal C_-(G\backslash W)} }~$ can be sampled by the wired Wilson algorithm with boundary conditions~$A \cup W$.

Therefore, under the measure~$\mu^W$, conditional on~$\mathcal C_+(F)$, a ECRSF with respect to~$W$ has the same law as a a ECRSF with respect to~$W$ and extremities of edges in~$\mathcal C_+(F)$ and can be sampled from a loop-erased random walk algorithm.

\subsection{All connected components with a cycle are finite}

We assume that Assumption~\ref{assumption2} holds for the weight function~$w_-$ as defined in Definition~\ref{def:w-} and for a family of cycles~$\Gamma \subset \mathcal C_-(G)$. In particular, Assumption~\ref{assumption} also holds for~$w_-$, but the weight function~$w$ can take values larger than 1.

We will show in this subsection that under this assumption, every connected component with a cycle is finite. 

 We will use the following lemma on the exponential decay of the tail distribution of ending times, which is a corollary of Section~\ref{section3}. 
 
 \begin{lemma}\label{rootinghitting}
Let~$m \in \mathbb N$,~$n \geq m$. Let~$C \subset G_n$ be a subgraph of~$G_n$.
Let~$\Prob_x$ be the law of a~$w_-$-loop erased random walk~$(X_n)$ starting from x and~$T_r$ be the rooting time of~$(X_n)$. Let~$T_C$ and~$T_{m,x}$ be the hitting times of~$C$ and~$\partial B_m^x$. Under Assumption~\ref{assumption} on~$w_-$, the following inequality holds
~$$ \Prob_x(\min(T_{C}, T_r)  \geq T_{m,x}) \leq \delta^m.$$
 \end{lemma}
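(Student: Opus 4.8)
The key observation is that $\min(T_C, T_r) \geq T_{m,x}$ is exactly the event that the $w_-$-loop erased random walk starting from $x$ reaches $\partial B_m^x$ before it is either rooted to a kept loop or hits $C$. Since hitting $C$ only provides an additional stopping opportunity, this event is contained in the event $\{T_r \geq T_{m,x}\}$ that the walk reaches $\partial B_m^x$ before being rooted. So the plan is to reduce to Lemma~\ref{upperbound} (equivalently Lemma~\ref{rooting} applied at the vertex $x$ with the exhaustion $(B_n^x)$), which already gives $\Prob_x(T_r^x \geq T_{m,x}^x) \leq \delta^m$ under Assumption~\ref{assumption} on $w_-$.

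First I would make the inclusion of events precise: deterministically, $T_C \geq 0$ so $\min(T_C, T_r) \leq T_r$, hence $\{\min(T_C, T_r) \geq T_{m,x}\} \subseteq \{T_r \geq T_{m,x}\}$. Therefore $\Prob_x(\min(T_C, T_r) \geq T_{m,x}) \leq \Prob_x(T_r \geq T_{m,x})$. Next I would invoke Lemma~\ref{upperbound}: Assumption~\ref{assumption2} for $w_-$ implies Assumption~\ref{assumption} for $w_-$ at the vertex $x$ with the exhaustion $(B_n^x)$, so there is $\delta = 1 - \alpha\beta \in (0,1)$, independent of $x$, $m$, and $C$, with $\Prob_x(T_r \geq T_{m,x}) \leq \delta^m$. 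Combining the two bounds gives $\Prob_x(\min(T_C, T_r) \geq T_{m,x}) \leq \delta^m$, which is the claim.

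I do not expect any serious obstacle here; the lemma is essentially a bookkeeping corollary of the already-established exponential decay of the rooting time along the exhaustion $(B_n^x)$. The only point requiring a word of care is that the bound must be uniform in the subgraph $C \subseteq G_n$ and in the starting point $x$; this is automatic because the inclusion of events discards $C$ entirely and $\delta$ from Lemma~\ref{upperbound} is already uniform in $x$. One should also note that the hypothesis $n \geq m$ together with $C \subseteq G_n$ plays no role in the bound itself — it is only there so that the statement is meaningfully situated inside the exhaustion — so it can simply be carried along without being used in the estimate.
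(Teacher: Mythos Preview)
Your proof is correct and matches the paper's own argument essentially line for line: use $\min(T_C,T_r)\leq T_r$ to obtain the inclusion $\{\min(T_C,T_r)\geq T_{m,x}\}\subset\{T_r\geq T_{m,x}\}$, then apply the exponential bound $\Prob_x(T_r\geq T_{m,x})\leq \delta^m$ from Lemma~\ref{rooting}/Lemma~\ref{upperbound}. Your additional remarks on uniformity in $C$ and $x$, and on the irrelevance of the hypothesis $n\geq m$ for the estimate, are accurate and go slightly beyond what the paper spells out.
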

 
 \begin{proof}
 Applying Lemma~\ref{rooting} to the random walk~$(X_n)$ with weight function~$w_-$ which satisfies Assumption~\ref{assumption}, we get 
~$$ \Prob_x(T_r  \geq T_{m,x}) \leq \delta^m.$$
But since~$\min(T_{C}, T_r) \leq T_r$, we have
$$ \Prob_x(\min(T_{C}, T_r)  \geq T_m^x) \leq \Prob_x(T_r  \geq T_{m,x}) \leq \delta^m,$$
which concludes the proof.
\end{proof}

  Let~$x \in V$ be a fixed vertex of~$G$. We introduce some events with compact support which depend on~$x$ and prove an upper bound on the probability of those events.
  
 \begin{definition}\label{defprobaconnect-}
For~$m \in \mathbb N$, we denote by~$A_{m} = \{x \leftrightarrow \partial B_{2m}^x \}$ the event that x and~$\partial B_{2m}^x$ are connected in~$F$, that is to say that there exists a path between~$x$ and~$\partial B_{2m}^x$ in~$B_{2m}^x$.
 If~$n \in \mathbb N$, we denote by~$ \Gamma^-_n = \{ x \leftrightarrow C_-(F_{G_n}) \}~$ the event that x is connected to a closed cycle in~$G_n$ of weight less than 1.  
 \end{definition}

 \begin{lemma}\label{probaconnect-}
Let~$m \in \mathbb N$. For~$n \geq m$ large enough, if~$F_n$ is distributed according to the free measure~$\mu_n$ on~$G_n$,
$$ \mu_n(A_m \cap \Gamma^-_n ) \leq (\vert \partial B_m^x \vert+1) \delta^m.$$
\end{lemma}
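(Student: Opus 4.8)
The plan is to bound the probability of the event $A_m \cap \Gamma_n^-$ by using Wilson's algorithm (with wired boundary conditions $\partial G_n$, for the weight function $w$, conditioned on positive cycles as in Theorem~\ref{algoconditional}, so that the negative part is sampled by a $w_-$-loop-erased random walk algorithm) with an ordering in which $x$ is the first vertex explored. The idea is that if $x$ is connected to a negative cycle of $F_{G_n}$ \emph{and} the connected component of $x$ reaches $\partial B_{2m}^x$, then the $w_-$-loop-erased random walk started from $x$ must itself either get rooted to a negative cycle or hit $\partial B_m^x$ before being rooted --- or else some later walk, started from a vertex of $\partial B_m^x$, must reach the component of $x$; in all these cases Lemma~\ref{rootinghitting} gives an exponentially small bound.

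First I would fix the ordering $\phi$ so that $x = v_1$, and run Wilson's algorithm for $\mu_n$ (free or wired --- we only need free here). Let $(Z_k)_{k \le \min(T_r^x, T_{\partial G_n})}$ be the first $w_-$-LERW. Condition first on the realization of positive cycles $\mathcal C_+(F_n) = C$; by Theorem~\ref{algoconditional} the remaining configuration is an ECRSF with respect to $W \cup A$ with weight $w_-$, sampled by the wired loop-erased algorithm, so I may as well work with $w_-$ throughout. Now consider two cases. If $T_r^x \ge T_{m,x}^x$ or $T_{\partial B_m^x}^{\,\text{of first walk}} \le T_r^x$ (i.e. the first walk leaves $B_m^x$ before getting rooted), Lemma~\ref{rootinghitting} applied with $C$ the set of already-explored vertices (initially empty, or $W$) bounds this by $\delta^m$. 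Otherwise the first walk is rooted inside $B_m^x$, so the branch of $x$ after one run lies entirely in $B_m^x$; then for $x$ to be connected to $\partial B_{2m}^x$, one of the subsequent branches attaching to the component of $x$ must start from a vertex and travel from the current tree out to $\partial B_{2m}^x$ --- and since the component so far is inside $B_m^x$ and has radius controlled, such a walk started from some vertex $v$ would need, in particular, one of the walks started from vertices of $\partial B_m^x$ to fail to be rooted before exiting its own ball $B_m^v$. This is where I would apply the union bound over the $|\partial B_m^x|$ vertices of $\partial B_m^x$, each contributing $\le \delta^m$ by Lemma~\ref{rootinghitting}, giving the factor $|\partial B_m^x| + 1$.

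The main obstacle --- and the step requiring the most care --- is the combinatorial/topological argument that connects the \emph{static} event $\{x \leftrightarrow \partial B_{2m}^x\} \cap \{x \leftrightarrow C_-\}$ to a \emph{union of dynamic events} about individual loop-erased random walks, each of which stays inside a ball of radius $\sim m$ around its starting vertex. The point is that the connected component of $x$ in the final configuration is built up as a union of LERW branches; if the component has a negative cycle and reaches distance $\ge 2m$ from $x$, then following the branches from $x$ outward, the first branch that crosses $\partial B_m^x$ is a LERW started from some vertex $v$ that was already in the component (hence within distance $\sim m$ of $x$, using the bounds $M'n \le d(x,\partial B_n^x) \le Mn$ of Assumption~\ref{assumption2}), and this branch must travel out to $\partial B_m^v \supset$ a point at distance $\ge m$, so $T_r^v \ge T_{m,v}^v$ for that walk; but also, to close a negative cycle somewhere, the first walk from $x$ must itself either close a negative cycle or exit $B_m^x$. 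Carefully choosing which "fresh" walks to track (it suffices to track only the walks started from $x$ and from the vertices of $\partial B_m^x$, since any branch reaching $\partial B_{2m}^x$ from inside $B_m^x$ must pass through $\partial B_m^x$ at a vertex from which a later --- in the ordering --- fresh walk originates, or be a continuation that already exited) lets me keep the union bound to $|\partial B_m^x| + 1$ terms. I would then conclude:
\[
\mu_n(A_m \cap \Gamma^-_n) \;\le\; \Prob_x(\min(T_{W}, T_r^x) \ge T_{m,x}^x) \;+\; \sum_{v \in \partial B_m^x} \Prob_v(T_r^v \ge T_{m,v}^v) \;\le\; (\lvert \partial B_m^x \rvert + 1)\,\delta^m,
\]
using Lemma~\ref{rootinghitting} (resp. Lemma~\ref{upperbound}) for each term, and the fact that, by Theorem~\ref{algoconditional}, conditioning on positive cycles reduces the sampling to the $w_-$-loop-erased algorithm to which these lemmas apply; averaging over the choice of $C = \mathcal C_+(F_n)$ preserves the bound.
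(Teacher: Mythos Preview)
Your overall strategy---condition on $\mathcal C_+(F_n)=C$ via Theorem~\ref{algoconditional}, run the $w_-$-Wilson algorithm with $x$ first, and bound $A_m\cap\Gamma_n^-$ by a union of events of the form $\{\min(T_C,T_r)\ge T_{m,\cdot}\}$ controlled by Lemma~\ref{rootinghitting}---matches the paper's. The difference is \emph{which} vertices you take the union over: you use $\{x\}\cup\partial B_m^x$, whereas the paper uses $\{x\}\cup\partial B_{2m}^x$ (so the bound actually proved in the paper is $(\lvert\partial B_{2m}^x\rvert+1)\delta^m$; the $\partial B_m^x$ in the displayed statement looks like a typo, harmless for the application in Lemma~\ref{cycle-}).

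Your choice of $\partial B_m^x$ leaves a genuine gap. Suppose, on your ``good'' event, the walk from $x$ roots to a negative cycle inside $B_m^x$, producing $T_x\subset B_m^x$, and every walk from $v\in\partial B_m^x$ ends before leaving $B_m^v$. Nothing prevents the walk from some $v\in\partial B_m^x$ from ending by hitting $T_x$---indeed $v$ borders $B_m^x$, so $B_m^v$ typically meets $T_x$. Then $v$ joins $x$'s component. Now a \emph{later} walk in the algorithm, started from some $z$ far outside $B_m^x$ (for instance $z\in\partial B_{2m}^x$), can walk toward $v$ and stop there, attaching $z$ to $x$'s component. Thus $A_m\cap\Gamma_n^-$ can hold even though all $\lvert\partial B_m^x\rvert+1$ of your tracked walks stayed in their balls. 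Your claim that ``any branch reaching $\partial B_{2m}^x$ from inside $B_m^x$ must pass through $\partial B_m^x$ at a vertex from which a later fresh walk originates, or be a continuation that already exited'' misreads how Wilson's algorithm grows a component: the outward portion of that branch (from $v$ toward $\partial B_{2m}^x$) is laid down by a walk that \emph{ends} at $v$, not one that starts there, so it is not controlled by the walk launched from $v$.

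The paper's fix is simply to move the union to $\partial B_{2m}^x$. If every $y\in\partial B_{2m}^x$ has its walk root or hit $C$ before leaving $B_m^y$, then---since $B_m^y$ is disjoint from $B_m^x$---that walk cannot touch $T_x$, so $y$ lands in a component rooted away from $x$'s cycle; components never merge in later steps, hence $x\not\leftrightarrow y$ for every $y\in\partial B_{2m}^x$ and $A_m$ fails. This is exactly the two-point argument of Lemma~\ref{connectdistance}, run simultaneously for $x$ against each $y\in\partial B_{2m}^x$.
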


\begin{proof}
Let~$n$ be a large enough integer such that  for every~$y \in \partial B_{2m}^x$, we have~$B_m^y \subset G_n$. 
Let~$C \subset \mathcal C_+(G_n)$. From Theorem~\ref{algoconditional}, conditional on~$\mathcal C_+(F_n)=C$,~$F_n$ is given by an algorithm of~$w_-$-loop erased random walks with boundary conditions on~$C$. The proof relies on the same ideas as that in the proof of Lemma~\ref{connectdistance} and Theorem~\ref{allccfinite}.

The event~$A_m \cap \Gamma^-_n~$ is satisfied if there exists~$y \in \{x \} \cup \partial B_{2m}^x$ such that the~$w_-$-loop erased random walk starting from~$y$ has left~$B_m^y$ before being rooted to a cycle in~$\mathcal C_-(G_n)$ and before touching~$C$. 

From Lemma~\ref{rootinghitting}, for every~$y \in \{x \} \cup \partial B_{2m}^x$, 

$$ \Prob_y(\min(T_{C}, T_r)  \geq T_{m,y}) \leq \delta^m.$$
 Then, the union bound concludes the proof.
 \end{proof}
 
 \begin{lemma}\label{cycle-}
 The previous lemma implies that
~$$ \mu(\{x \leftrightarrow C_-(F) \} \cap \{ \vert cc(x) \vert = \infty \}) =0.$$
 \end{lemma}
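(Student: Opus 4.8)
The plan is to bound the measure of $\{x\leftrightarrow C_-(F)\}\cap\{\vert cc(x)\vert=\infty\}$ by $\mu\big(A_m\cap\{x\leftrightarrow C_-(F)\}\big)$ for every $m$, where $A_m=\{x\leftrightarrow\partial B_{2m}^x\}$ is the event of Definition~\ref{defprobaconnect-}, and then to show that this last quantity tends to $0$. Indeed the inclusion $\{\vert cc(x)\vert=\infty\}\subseteq A_m$ holds for every $m$: if $cc(x)$ is infinite it contains a vertex $z\notin B_{2m}^x$, and following a path of $F$ from $x$ to $z$ up to its first exit of $B_{2m}^x$ produces a path of $F$ lying inside $B_{2m}^x$ and joining $x$ to $\partial B_{2m}^x$. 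Hence $\{x\leftrightarrow C_-(F)\}\cap\{\vert cc(x)\vert=\infty\}\subseteq A_m\cap\{x\leftrightarrow C_-(F)\}$ for all $m$.

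The main work, and the delicate point, is to transfer the finite-volume estimate of Lemma~\ref{probaconnect-} to $\mu$, because the event $\Gamma^-_n$ appearing there refers to negative cycles inside the finite graph $G_n$, whereas $\{x\leftrightarrow C_-(F)\}$ is an event about the infinite configuration. To bridge the two, for each $k$ I would introduce the finite-support event $\Gamma_{(k)}$ that $x$ is joined to a negative cycle of $F$ using only edges of $F$ whose two endpoints lie in $G_k$. Since a connecting path and the cycle it reaches are finite, they are contained in $G_k$ for $k$ large, so $\Gamma_{(k)}\uparrow\{x\leftrightarrow C_-(F)\}$, and by continuity from below $\mu\big(A_m\cap\{x\leftrightarrow C_-(F)\}\big)=\lim_{k}\mu\big(A_m\cap\Gamma_{(k)}\big)$. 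For fixed $m$ and $k$ the event $A_m\cap\Gamma_{(k)}$ is local, hence $\mu\big(A_m\cap\Gamma_{(k)}\big)=\lim_n\mu_n\big(A_m\cap\Gamma_{(k)}\big)$ by the assumed weak convergence $\mu_n\to\mu$. Finally, for $n\geq k$ a configuration of $G_n$ lying in $\Gamma_{(k)}$ has $x$ connected in $F_n$ to a negative cycle inside $G_n$, hence lies in $\Gamma^-_n$, so $A_m\cap\Gamma_{(k)}\subseteq A_m\cap\Gamma^-_n$ and Lemma~\ref{probaconnect-} gives $\mu_n\big(A_m\cap\Gamma_{(k)}\big)\leq(\vert\partial B_m^x\vert+1)\delta^m$ once $n$ is large enough.

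Letting $n\to\infty$ and then $k\to\infty$ yields $\mu\big(A_m\cap\{x\leftrightarrow C_-(F)\}\big)\leq(\vert\partial B_m^x\vert+1)\delta^m$ for every $m$; combining this with the inclusion of the first paragraph and with the bounds $\vert\partial B_m^x\vert\leq Cm^d$ and $\delta<1$ from Assumption~\ref{assumption2}, the right-hand side tends to $0$ as $m\to\infty$, so $\mu\big(\{x\leftrightarrow C_-(F)\}\cap\{\vert cc(x)\vert=\infty\}\big)=0$. I expect the only real care needed to be in handling the order of the three limits $n\to\infty$, $k\to\infty$, $m\to\infty$, and in checking that $A_m\cap\Gamma_{(k)}$ is genuinely a finite-support (local) event so that weak convergence of $\mu_n$ to $\mu$ can be applied to it.
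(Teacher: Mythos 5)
Your proposal is correct and follows essentially the same route as the paper: approximate $\{x\leftrightarrow C_-(F)\}$ from below by increasing local events (your $\Gamma_{(k)}$ plays the role of the paper's $\Gamma^-_{n_0}$), pass the finite-volume bound of Lemma~\ref{probaconnect-} to $\mu$ via weak convergence on these local events, and then let $m\to\infty$ using $\vert\partial B_m^x\vert\leq Cm^d$ and the inclusion $\{\vert cc(x)\vert=\infty\}\subseteq A_m$ (the paper phrases this last step as monotone convergence along the decreasing $A_m$, which is equivalent). The ordering of the limits and the localization argument you flag as the delicate points are exactly the ones the paper handles, in the same way.
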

 
 \begin{proof}
Let~$\epsilon >0$. Let~$m \in \mathbb N$ fixed, large enough such that~$ (\vert \partial B_m^x \vert+1)\delta^m<\epsilon$. We consider the notations from Definition~\ref{defprobaconnect-}.
 
Since~$(\Gamma^-_n )$ is increasing, if we let~$ \Gamma^- := \{x \leftrightarrow \mathcal C_-(F) \} = \cup \Gamma^-_n~$, then
$$\mu(A_m \cap \Gamma^-) = \mu(A_m \cap \cup \Gamma^-_n ) = \mu(\cup_n (A_m \cap \Gamma^-_n)) = \lim_n \mu(A_m \cap \Gamma^-_n).$$

Let us consider~$n_0$ large enough such that the inequality from Lemma~\ref{probaconnect-} holds. Since for~$n \geq n_0$, ~$ \mu_n(A_m \cap \Gamma^-_{n_0}) \leq \mu_n(A_m \cap \Gamma^-_n) \leq \epsilon,~$

 We obtain
~$$ \mu(A_m \cap \Gamma^-_{n_0}) = \lim_n \mu_n(A_m \cap \Gamma^-_{n_0}) \leq \epsilon.~$$

 It holds for every~$n_0$ large enough and therefore,~$ \mu(A_m \cap \Gamma^-) \leq \epsilon$.
 
 Since this inequality holds for~$m$ large enough, we have when~$m \rightarrow \infty$,~$ \mu(A_m \cap \Gamma^-) \rightarrow 0~$.

 Therefore, since~$(A_m)$ is decreasing and~$\cap_m A_m =  \{ \vert cc(x) \vert = \infty \}$, the monotone convergence theorem concludes the proof. 
 \end{proof}

 \begin{definition}\label{defprobaconnect+}
For~$l \in \mathbb N$, let~$\Gamma^+_l = \{x \leftrightarrow_{B_l^x} C_+(F_{B_l^x})\}$ be the event that x is connected inside~$B_l^x$ to a cycle with weight larger than 1 which is inside~$B_l^x$, that is to say the event that in~$F_{B_l^x}$, the connected component of x contains a cycle with weight larger than 1. For~$m \in \mathbb N$, let~$A_m := \{x \leftrightarrow \partial B_{m}^x \}$
be the event that x is connected to the boundary of~$B_m^x$. 
 \end{definition}

 \begin{lemma}
Let~$m_0 \in \mathbb N$. For~$m$ large enough, there exists~$n_0$ such that if~$n \geq n_0$ and if~$F_n$ is distributed according to~$\mu_n$, then,
~$$ \mu_n(A_m \cap \Gamma^+_l) \leq \vert \partial B_{m_0}^x \vert  \delta^{m_0}.$$
 \end{lemma}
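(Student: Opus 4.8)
The plan is to mimic the proof of Lemma~\ref{probaconnect-}, conditioning on the set of positive cycles and then invoking Theorem~\ref{algoconditional} together with Lemma~\ref{rootinghitting}. Fix a possible value $C\subseteq\mathcal C_+(G_n)$ of the positive cycles and condition on $\mathcal C_+(F_n)=C$; write $A$ for the set of vertices incident to an edge of $C$. By Theorem~\ref{algoconditional} applied with $W=\emptyset$ on the finite graph $G_n$, conditionally on this event the configuration $F_n\backslash C$ is distributed as an ECRSF with respect to $A$ for the weight function $w_-$, hence is sampled by the $w_-$-Wilson algorithm with boundary condition $A$ (which terminates almost surely once $n$ is large, since Assumption~\ref{assumption2} holds for $w_-$ with a family $\Gamma\subseteq\mathcal C_-(G)$), independently of the ordering of the vertices.

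The heart of the argument is a structural observation about the event $A_m\cap\Gamma^+_l$ in this conditional picture. On $\Gamma^+_l$ the connected component $K$ of $x$ in $F_n$ is a unicycle whose unique cycle $\gamma$ is positive, so $\gamma\in C$, and $\gamma$, together with the path joining it to $x$, lies inside $B_l^x$; on $A_m$ (with $m>l$) the component $K$ also contains a vertex $y\in\partial B_m^x$. Since every positive cycle distinct from $\gamma$ lies in another component of $F_n$, the only edges of $K$ belonging to $C$ are those of $\gamma$, so $K\backslash C$ is a forest. Consequently the directed path of $K$ from $y$ towards the cycle, stopped at the first vertex $w$ it meets on $\gamma$, is a simple path contained in $F_n\backslash C$ from $y$ to $w\in A$, and it coincides with the directed path from $y$ in the ECRSF $F_n\backslash C$; in particular it reaches a vertex of $\gamma\subseteq B_l^x$ without having met $A$ before and without the component of $y$ being rooted to a negative cycle. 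Running the $w_-$-Wilson algorithm with $y$ as the first vertex, this path is exactly the trajectory of the $w_-$-loop erased random walk issued from $y$, stopped at $\min(T^y_A,T^y_r)$. Since $y\in\partial B_m^x$ while $w\in B_l^x$, the geometric estimates of Assumption~\ref{assumption2}, used as in the proofs of Lemmas~\ref{connectdistance} and~\ref{probaconnect-}, furnish an integer $k$ that grows linearly with $m$ once $l$ is fixed and for which $B_k^y$ is disjoint from $B_l^x$; on the event under consideration this forces $\min(T^y_A,T^y_r)\ge T^y_{k,y}$.

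It then remains to apply Lemma~\ref{rootinghitting}, with the vertex set $A$ playing the role of the subgraph there, to each $y\in\partial B_m^x$, and to take a union bound over $y$, which gives $\mu_n(A_m\cap\Gamma^+_l\mid\mathcal C_+(F_n)=C)\le|\partial B_m^x|\,\delta^{k}$. The bound being uniform in $C$, averaging over $C$ yields $\mu_n(A_m\cap\Gamma^+_l)\le|\partial B_m^x|\,\delta^{k}\le C\,m^{d}\,\delta^{k}$, using $|\partial B_m^x|\le C m^d$. Because $\delta<1$ and $k$ grows linearly in $m$, the right-hand side tends to $0$ as $m\to\infty$, so for $m$ large enough, depending only on $l$, $m_0$ and $x$, it is at most $|\partial B_{m_0}^x|\,\delta^{m_0}$; the threshold $n_0$ is any integer large enough that $B_l^x$, $B_m^x$ and all the subgraphs $B_k^y$ with $y\in\partial B_m^x$ are contained in $G_n$, so that Lemma~\ref{rootinghitting} is applicable.

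The main obstacle is the structural step of the second paragraph: one must verify that, although $x$ and $y$ may lie in distinct components of $F_n\backslash C$ once the positive cycle $\gamma$ has been removed, the directed path issued from $y$ still travels all the way from $\partial B_m^x$ into $B_l^x$ before being rooted or meeting $A$, so that it can be reinterpreted as a single $w_-$-loop erased random walk to which Lemma~\ref{rootinghitting} applies. Everything else is the same bookkeeping with the constants $M,M',C,d$ of Assumption~\ref{assumption2} as in the proofs of Lemmas~\ref{connectdistance} and~\ref{probaconnect-}.
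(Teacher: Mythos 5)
Your proof is correct and takes essentially the same route as the paper's: condition on the set of positive cycles, invoke Theorem~\ref{algoconditional} to sample the remainder by the $w_-$-Wilson algorithm with boundary the vertices of those cycles, observe that on $A_m \cap \Gamma^+_l$ some $w_-$-loop erased walk started from a point of $\partial B_m^x$ must travel into $B_l^x$ before hitting that boundary set or being rooted, and conclude with Lemma~\ref{rootinghitting} and a union bound over $\partial B_m^x$. Your write-up is in fact more explicit than the paper's on the structural step (why the branch from $y$ ends at a vertex of the positive cycle inside $B_l^x$) and on the bookkeeping of the exponent; note only that the union bound naturally produces $\vert \partial B_m^x\vert$ terms, so the passage to the stated bound $\vert \partial B_{m_0}^x\vert \delta^{m_0}$ requires exactly the kind of ``$m$ large enough'' comparison you carry out.
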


 \begin{proof}\label{bordcycle+}
Let~$m_0, m \in \mathbb N$. Assume that~$m$ is large enough such that for every~$y \in \partial B_m^x$, the equality~$B_{m_0}^y \cap B_l^x = \emptyset$ holds.
Let~$n$ be large enough such that for every~$y \in \partial B_m^x$, the inclusion~$B_{m_0}^y \subset G_n$ holds.
  
Let~$C \in \mathcal C_+(G_n)$. 
Conditional on~$\mathcal C_+(F_n)= C$,~$F_n$ is given by an algorithm of~$w_-$-loop erased random walks with wired conditions on~$C$. 

The event~$\Gamma^+_l \cap A_m$ is satisfied if the~$w_-$-loop erased random walk starting from x hits a cycle in~$C_+(F_{B_l^x})$ before leaving~$B_l^x$ and before being rooted to another cycle and if one of the~$w_-$-loop erased random walks starting from points of ~$\partial B_m^x$ reaches~$B_l^x$ before being rooted to a cycle or hitting~$C$. 
Therefore, from Lemma~\ref{rootinghitting} and from the union bound,
$$ \mu_n(A_m \cap \Gamma^+_l) \leq \vert \partial B_{m_0}^x \vert  \delta^{m_0}~$$
which concludes the proof.
 \end{proof}
  
 \begin{lemma}\label{cycle+}
 For every vertex~$x \in V$ of~$G$, we have
 ~$$ \mu(\{x \leftrightarrow C_+(F) \} \cap \{ \vert cc(x) \vert = \infty \}) =0.$$
 \end{lemma}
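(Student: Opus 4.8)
The plan is to mirror the structure of the proof of Lemma~\ref{cycle-}, replacing the role of the negative-cycle event $\Gamma^-$ by the positive-cycle event $\{x \leftrightarrow \mathcal C_+(F)\}$ and using the localized estimate of Lemma~\ref{bordcycle+} instead of Lemma~\ref{probaconnect-}. First I would fix a vertex $x$ and observe that $\{x \leftrightarrow \mathcal C_+(F)\} = \bigcup_{l} \Gamma^+_l$ is an increasing union of events with compact support, where $\Gamma^+_l$ is as in Definition~\ref{defprobaconnect+}, and that $\{\,|cc(x)| = \infty\,\} = \bigcap_m A_m$ is a decreasing intersection, with $A_m = \{x \leftrightarrow \partial B_m^x\}$. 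Hence it suffices to bound $\mu(A_m \cap \Gamma^+_l)$ for all $m, l$ and then take limits.

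Next, fix $\epsilon > 0$ and choose $m_0$ large enough that $|\partial B_{m_0}^x|\,\delta^{m_0} < \epsilon$, where $\delta < 1$ is the constant from Lemma~\ref{rootinghitting} applied to $w_-$. For a fixed $l$, take $m$ large enough (depending on $l$ and $m_0$) so that the hypotheses of Lemma~\ref{bordcycle+} hold, and then take $n \geq n_0$ large enough for that lemma to apply, giving $\mu_n(A_m \cap \Gamma^+_l) \leq |\partial B_{m_0}^x|\,\delta^{m_0} < \epsilon$. Since $A_m \cap \Gamma^+_l$ has compact support, weak convergence $\mu_n \to \mu$ yields $\mu(A_m \cap \Gamma^+_l) \leq \epsilon$. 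Because $\Gamma^+_l$ is increasing in $l$ with union $\{x \leftrightarrow \mathcal C_+(F)\}$, the monotone convergence theorem gives $\mu(A_m \cap \{x \leftrightarrow \mathcal C_+(F)\}) = \lim_l \mu(A_m \cap \Gamma^+_l) \leq \epsilon$ (note that the choice of $m$ only needed $l$ fixed, so one must be a little careful: it is cleaner to first send $l \to \infty$ with $m$ chosen accordingly, or to observe that for each $l$ the bound $\mu(A_{m} \cap \Gamma^+_l)\le\epsilon$ holds for all $m$ beyond a threshold, and $A_m$ is decreasing, so $\mu(A_{m'}\cap\Gamma^+_l)\le\epsilon$ for all $m'$; then let $l\to\infty$). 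As $\epsilon$ was arbitrary, $\mu(A_m \cap \{x \leftrightarrow \mathcal C_+(F)\}) = 0$ for $m$ large, hence for all $m$ by monotonicity, and letting $m \to \infty$, since $\bigcap_m A_m = \{|cc(x)| = \infty\}$, the monotone convergence theorem gives the claim.

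The main obstacle is the bookkeeping of the order of limits: the threshold for $m$ in Lemma~\ref{bordcycle+} depends on $l$, so one cannot naively exchange the limits in $l$ and $m$. The fix is to exploit monotonicity: $\{A_m\}$ is decreasing in $m$, so once $\mu(A_{m(l)} \cap \Gamma^+_l) \leq \epsilon$ is established for the specific threshold $m(l)$, it follows that $\mu(A_{m'} \cap \Gamma^+_l) \leq \epsilon$ for every $m' \geq m(l)$, and in particular $\mu(A_{m'} \cap \Gamma^+_l)\le\epsilon$; then taking $l \to \infty$ first (for fixed large $m'$, increasing a threshold only makes things easier, so in fact for every fixed $m'$ one has the bound for all sufficiently large $l$, and $\Gamma^+_l$ increasing lets us pass to the union) yields $\mu(A_{m'} \cap \{x \leftrightarrow \mathcal C_+(F)\}) \leq \epsilon$. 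Then $\epsilon \to 0$ and $m' \to \infty$ finish the argument. Everything else is a routine transcription of the techniques already used for Lemmas~\ref{connectdistance}, \ref{probaconnect-} and~\ref{cycle-}.
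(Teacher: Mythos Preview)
Your strategy is exactly the paper's: use Lemma~\ref{bordcycle+} to bound $\mu_n(A_m\cap\Gamma^+_l)$, pass to $\mu$ by weak convergence on the local event $A_m\cap\Gamma^+_l$, and then take limits in $m$ and $l$. The only issue is the order in which you take those limits.

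Your attempt to send $l\to\infty$ before $m$ does not work, and your proposed fixes are not correct. The threshold $m(l)$ required by Lemma~\ref{bordcycle+} (namely, $m$ large enough that $B_{m_0}^y\cap B_l^x=\emptyset$ for every $y\in\partial B_m^x$) grows with $l$. Hence for a \emph{fixed} $m'$ you only control $\mu(A_{m'}\cap\Gamma^+_l)$ for \emph{small} $l$ (those with $m(l)\le m'$), not for large $l$; your claim that ``for every fixed $m'$ one has the bound for all sufficiently large $l$'' is backwards. Consequently you cannot conclude $\mu(A_{m'}\cap\Gamma^+)\le\epsilon$, and indeed there is no reason for $\mu(A_{m'}\cap\Gamma^+)$ to be small for finite $m'$.

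The clean route, which the paper follows, is to fix $l$ first. For each $\epsilon>0$ you produce some $m=m(l,\epsilon)$ with $\mu(A_m\cap\Gamma^+_l)\le\epsilon$; since $A=\{|cc(x)|=\infty\}\subset A_m$, this gives $\mu(A\cap\Gamma^+_l)\le\epsilon$ for every $\epsilon$, hence $\mu(A\cap\Gamma^+_l)=0$. Now $l$ is free, and monotone convergence along the increasing sequence $(\Gamma^+_l)_l$ yields $\mu(A\cap\Gamma^+)=0$. Everything else in your plan is fine.
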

 
 \begin{proof}
 Let~$x \in V$ be a fixed vertex of~$G$ and let
~$$ A := \{ \vert cc(x) \vert = \infty \} = \bigcap_m A_m$$
 be the event that the connected component of 0 is infinite, where~$A_m$ was defined in Definition~\ref{defprobaconnect+}.
  
  Let~$l \in \mathbb N$. Let~$\epsilon >0$. Let~$m_0$ large enough such that~$\vert \partial B_{m_0}^x \vert  \delta^{m_0} \leq \epsilon$. Let~$m$ be large enough such that for every~$y \in \partial B_m^x$,~$B_{m_0}^y \cap B_l^x = \emptyset$. Let~$n$ be large enough such that for every~$y \in \partial B_m^x$,~$B_{m_0}^y \subset G_n$.
 
 Let~$F_n$ distributed according~$\mu_n$.
 Then, from Lemma~\ref{bordcycle+},
$$ \mu_n(A_m \cap \Gamma^+_l) \leq \vert \partial B_{m_0}^x \vert  \delta^{m_0}  \leq \epsilon.$$
 
Since this inequality holds for every~$n$ large enough and~$A_m \cap \Gamma^+_l$ depends on finitely many edges,
$$\mu(A_m \cap \Gamma^+_l) = \lim_n \mu_n(A_m \cap \Gamma^+_l) \leq \epsilon.$$
 
Since this inequality holds for every~$\epsilon$ for~$m$ large enough, we obtain when~$m \rightarrow \infty$,
$$\mu(A_m \cap \Gamma^+_l) \rightarrow 0.$$
 
Since the sequence of events~$(A_m)_m$ is decreasing,
$$ \mu(A \cap \Gamma^+_l) = \mu  \left(\bigcap_m A_m \cap \Gamma^+_l\right) = \lim_m \mu(A_m \cap \Gamma^+_l) = 0.~$$

Since the sequence of events~$(\Gamma^+_l)_l$ is increasing and
$$\Gamma^+ = \{x \leftrightarrow C_+(F)\} = \bigcup_l \Gamma^+_l,$$
we have 
$$ \mu(A \cap \Gamma^+) = \mu \left(A \cap  \left(\bigcup_l \Gamma^+_l \right)\right) = \mu \left(\bigcup_l (A \cap \Gamma^+_l)\right) = \lim_l \mu(A \cap \Gamma^+_l) = 0.~$$
which is precisely what we wanted to prove.
\end{proof}

Let us emphasize that Lemma~\ref{cycle+} and Lemma~\ref{cycle-} show that for every vertex~$x \in V$ of~$G$, almost surely, if~$x$ is connected to a cycle in the random configuration~$F$, the connected component of~$x$ is finite. Therefore, since~$G$ is countable, we immediately deduce the following theorem.  
 
 \begin{theoreme}\label{cyclefinite}
 Under a measure~$\mu_w$ such that~$w_-$ satisfies Assumption~\ref{assumption2}, every connected component with a cycle is finite.  
 \end{theoreme}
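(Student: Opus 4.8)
The plan is to deduce Theorem~\ref{cyclefinite} directly from the two lemmas immediately preceding it, namely Lemma~\ref{cycle-} and Lemma~\ref{cycle+}, together with the hypothesis that $w_-$ satisfies Assumption~\ref{assumption2}. The key observation is that a connected component of a vertex $x$ in a CRSF $F$ contains a cycle if and only if $x$ is connected to that cycle, and every cycle of $G$ is either in $\mathcal C_-(G)$ (weight $\leq 1$) or in $\mathcal C_+(G)$ (weight $>1$); there is no third option. Hence the event ``$cc(x)$ is infinite and $cc(x)$ contains a cycle'' is contained in the union $\bigl(\{x \leftrightarrow \mathcal C_-(F)\} \cap \{|cc(x)| = \infty\}\bigr) \cup \bigl(\{x \leftrightarrow \mathcal C_+(F)\} \cap \{|cc(x)| = \infty\}\bigr)$.

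First I would fix a vertex $x \in V$. By Lemma~\ref{cycle-}, the first event in the union above has $\mu$-probability zero, and by Lemma~\ref{cycle+}, the second event in the union also has $\mu$-probability zero (note Lemma~\ref{cycle+} is stated for every vertex $x$, and Lemma~\ref{cycle-} is likewise obtained from its proof for an arbitrary fixed $x$). By subadditivity the probability that $x$ lies in an infinite connected component which contains a cycle is therefore zero. Equivalently, $\mu$-almost surely, if the connected component of $x$ has a cycle then it is finite.

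Finally I would take the union over all $x \in V$. Since $G$ is countable, a countable union of $\mu$-null events is $\mu$-null, so $\mu$-almost surely, for \emph{every} vertex $x$, if $cc(x)$ contains a cycle then $cc(x)$ is finite. Since every connected component with a cycle is $cc(x)$ for some $x$ in it, this says precisely that every connected component of $F$ which contains a cycle is finite, which is the assertion of the theorem.

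There is no real obstacle here: the theorem is a formal corollary, and all the analytic work (the exponential decay estimates coming from Lemma~\ref{rooting} and Lemma~\ref{rootinghitting}, the conditioning result Theorem~\ref{algoconditional}, and the monotone-convergence passages to the limit) has already been carried out in Lemmas~\ref{cycle-} and~\ref{cycle+}. The only point worth stating carefully is the exhaustive dichotomy $\mathcal C(G) = \mathcal C_-(G) \sqcup \mathcal C_+(G)$, which guarantees that a component carrying a cycle necessarily falls into one of the two cases treated by those lemmas; this is immediate from the definitions of $\mathcal C_-$ and $\mathcal C_+$ given at the start of Section~\ref{section6}.
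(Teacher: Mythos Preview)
Your proposal is correct and follows exactly the paper's own argument: the theorem is deduced from Lemma~\ref{cycle-} and Lemma~\ref{cycle+} via the dichotomy $\mathcal C(G)=\mathcal C_-(G)\cup\mathcal C_+(G)$, and then a countable union over vertices. The paper states this in a single sentence just before the theorem, so your write-up is in fact more detailed than the original.
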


From Proposition~\ref{onecycle}, we know that every finite connected component has a cycle then, if a connected component does not have a cycle, it is necessarily an infinite tree. Therefore, almost surely every connected component is either a finite cycle-rooted tree or an infinite tree.

\section*{Conclusion and open questions}

When a positive weight function on oriented cycles takes values in $[0,1]$ and satisfies an assumption of minoration of weights, it gives rise to a unique infinite volume measure on cycle-rooted spanning forests, which is sampled by an algorithm of loop-erased random walks and which is the thermodynamic limit of finite volume measures, with respect to free or wired boundary conditions. Under this measure, almost surely, all connected components are finite and the edge-to-edge correlations decay is exponential. 

By contrast, when the weight function is constant equal to 0, the model is the uniform spanning tree and the thermodynamic limit in infinite volume of finite volume measures is the free or wired uniform spanning forests measure, depending on boundary conditions. On a large class of graphs (amenable graphs for instance), the infinite volume measure does not depend on the boundary conditions and is sampled by the Wilson algorithm of loop-erased random walks (see \cite{10.1214/aop/1176989121, benjamini_uniform_2001, lyons_probability_2017}). Under this measure, almost surely, every connected component is an infinite tree and the edge-to-edge correlations have long range (for instance, they decay polynomially for~$\mathbb Z^d$).

Considering these two cases as instances of a same model, we thus observe two qualitatively distinct phases, depending on the weight function on cycles. 

For determinantal measures on cycle-rooted spanning forests (see \cite{kenyon_spanning_2011}) associated to a unitary connection, sequences of measures on finite growing subgraphs also converge towards infinite volume measures (see \cite{kenyon_determinantal_2019, sun_toroidal_2016, 10.1214/15-AOP1078, kassel_determinantal_2020}). When the connection satisfies some assumptions, the infinite volume measure does not depend on the boundary conditions (see \cite{KL6, these}). 

These determinantal measures are associated to a weight function on cycles which can take values larger than 1, like in Section~\ref{section6}. Under some assumptions on the connection, the assumption of minoration of cycle weights (Assumption~\ref{assumption2}) is satisfied and therefore, by Theorem~\ref{cyclefinite}, almost surely all connected components are either finite cycle-rooted trees or infinite trees. We also observe two phases (polynomial versus exponential decay of edge-to-edge correlations) depending on the unitary connection (see \cite{these}).

A relevant question is to know if under the assumption of minoration on the weight function, there are infinite trees with a positive probability under the infinite volume measure, in particular in the case where the measure is determinantal and associated to a weight function which is provided by a unitary connection.

\section*{Acknowledgments}

We thank Adrien Kassel for suggesting this topic to us and for guidance throughout its study. We also thank Titus Lupu, B\'eatrice de Tili\`ere, C\'edric Boutillier and Kilian Raschel for helpful conversations. Financial support was partly provided by ANR grant number ANR-18-CE40-0033.

\bibliographystyle{alpha}
\bibliography{bibli}

\end{document}